\newtheorem{theorem}{\bf Theorem}[section]
\newtheorem{lemma}[theorem]{\bf Lemma}
\newtheorem{proposition}[theorem]{\bf Proposition}
\newtheorem{remark}[theorem]{\bf Remark}
\newtheorem{example}[theorem]{\bf Example}
\newtheorem{corollary}[theorem]{\bf Corollary}
\newtheorem{problem}[theorem]{\bf Problem}
\newcommand{\cis}[1][C]{\ensuremath{\mathbb{#1}}}
\newcommand{\av}[1]{\ensuremath{\mathcal{#1}}}
\newcommand{\vek}[1][h]{\ensuremath{\mathbf{#1}}}
\newcommand{\f}[1]{\mathbf{#1}}
\newcommand{\pr}[1]{\ensuremath{\mathbb{P}^{#1}_{\cis}}}
\newcommand{\prR}[1]{\ensuremath{\mathbb{P}^{#1}_{\cis[R]}}}
\newcommand{\af}[1]{\ensuremath{\mathbb{A}^{#1}_{\cis}}}
\newcommand{\afR}[1]{\ensuremath{\mathbb{A}^{#1}_{\cis[R]}}}
\newcommand{\genus}{\ensuremath{\mathrm{g}}}
\newcommand{\rank}{\ensuremath{\mathrm{rank}}}
\newcommand{\gr}{\ensuremath{\mathrm{Gr}}}
\newcommand{\C}{\mathbb{C}}
\newcommand{\R}{\mathbb{R}}
\newcommand{\I}{\mathrm{i}}
\begin{document}
\title{Contour curves and isophotes on rational ruled surfaces}
\author{Jan Vr\v{s}ek\footnote{E--mail:\ vrsekjan@kma.zcu.cz}}
\date{}
\maketitle
\sloppy


\begin{abstract}
The ruled surfaces, i.e., surfaces generated by one parametric set of lines, are widely used in the~field of applied geometry. An~isophote on a surface is a curve consisting of surface points whose normals form a constant angle with some fixed vector. Choosing an angle equal to $\pi/2$ we obtain a special instance of a~isophote -- the so called contour curve. While contours on rational ruled surfaces are rational curves, this is no longer true for the isophotes. Hence we will provide a formula for their genus. Moreover we will show that the only surfaces with a~rational generic contour are just rational ruled surfaces and a one particular class of cubic surfaces. In addition we will deal with the reconstruction of ruled surfaces from their contours and silhouettes.\\

\medskip\noindent
\emph{Key words}: Contour curve, isophote, ruled surface, rational parameterization, surface reconstruction
\end{abstract}

%
%

\section{Introduction}
Let $\av{X}$ be a surface in the projective space $\prR{3}$ and $\av{X}_{sm}$ denotes the set of its smooth points. Then for any fixed point $\f a\in\prR{3}$  the~\emph{contour} $\av{C}_\f a$ of $\av{X}$ with respect to a~viewpoint $\f a$ is defined as the closure of the set
\begin{equation}\label{eq contour}
  \{\f p\in\av{X}_{sm}:\ \f a\in T_\f p\av{X}_{sm} \},
\end{equation}
where $T_\f p\av{X}_{sm}$ denotes the~tangent plane at $\f p$. If $\av{H}$ is an~arbitrary plane not passing through $\f a$ then we may project a contour $\av{C}_\f a$ from the~point $\f a$ to the plane $\av{H}$. The projected curve is then the~so called \emph{silhouette} and usually denoted $\av{S}_\f a$, see Fig.~\ref{fig contour}. Some related studies on contours, silhouettes and their applications can be found e.g. in \cite{BiLa13,BiLaVr15,KiLe03b,SeKiKiEl06}

\begin{figure}[t]
\begin{center}
  \includegraphics[width=0.45\textwidth]{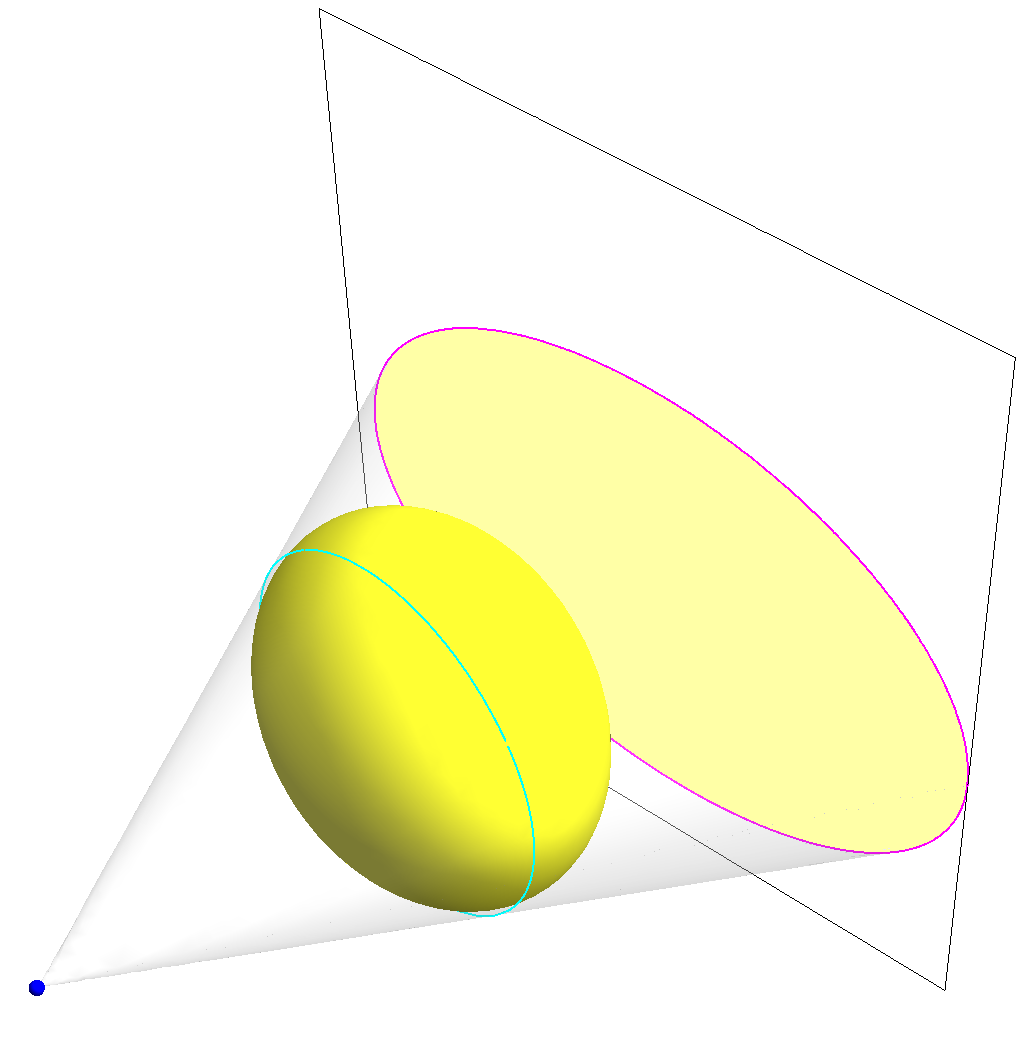}
  \begin{picture}(0,0)
\put(-200,10){$\f a$}
\put(-120,160){$\av{H}$}
\put(-40,100){$\av{S}_\f a$}
\put(-120,60){$\av{C}_\f a$}
\end{picture}
\hspace{4ex}

\begin{minipage}{0.9\textwidth}
\caption{The contour $\av{C}_\f a$ (cyan) w.r.t. the~point $\f a$ and the silhouette $\av{S}_\f a$ (magenta) as the projection of the contour into the plane $\av{H}$. \label{fig contour}}
\end{minipage}
\end{center}
\end{figure}

\begin{remark}\label{rem developable}
  If $\av{X}$ is a~developable surface, i.e. an envelope of one parametric set of planes, then the contour curves consist of unions of finite number of lines. In what follows we assume a surface to be non--developable.
\end{remark}

Let $(x_0:x_1:x_2:x_3)$ be coordinates in $\prR{3}$. Fix a hyperplane $\omega:x_0=0$ and the~absolute conic section $\Omega: x_0=x_1^2+x_2^2+x_3^2$ then the complement $\afR{3}=\prR{3}\backslash\omega$ is an~affine space endowed with the usual scalar product.  The~plane $\omega$ is called a plane at infinity and its points can be understood as directions in $\afR{3}$.  
We write $A=(a_1,a_2,a_3)$ for dehomogenization of a~point $(1:a_1:a_2:a_3)$ and $\overrightarrow{a}=(a_1,a_2,a_3)$ for dehomogenization of a~direction $\f a = (0:a_1:a_2:a_3)$. Depending on the position of the~point $\f a$, it is sometimes distinguished  between the~contour w.r.t a~central projection ($\f a\not\in\omega$) and a~parallel projection ($\f a\in\omega$). 

The~\emph{Gauss mapping} $\gamma:\av{X}\dashrightarrow(\prR{3})^\vee$,\footnote{The dashed arrow emphasizes the fact that the mapping $\gamma$ need not to be~defined for every point of the surface, but only on its dense subset} associated to a~surface $\av{X}\subset\prR{3}$,  assigns to a point of the surface its tangent plane $\gamma:\f p\mapsto T_\f p \av{X}$, viewed as a point in the~dual space $(\prR{3})^\vee$. If $\av{X}$ is given implicitly  by a homogeneous polynomial equation $F(x_0,x_1,x_2,x_3)=0$ then the formula for the~Gauss mapping is just
\begin{equation}\label{eq gauss mapping}
  \gamma:\f p\mapsto\left(\partial_{x_0}F(\f p):\cdots:\partial_{x_3}F(\f p)\right).
\end{equation}
The image of a~non-developable surface under the~Gauss map is again an~algebraic surface -- the so called \emph{dual surface}  $\av{X}^\vee$.
The \emph{normal mapping} of the surface $\nu:\av{X}\dashrightarrow\prR{2}$ assigns to a point of the surface a normal direction at this point. To define it in a projective settings just assign to a point $\f p\in\av{X}$ the point polar to the line $T_\f p\av{X}\cap \omega$ with respect to a polarity induced by the~absolute conic section $\Omega$. With the choice of $\Omega$ we have made, the normal mapping is 
\begin{equation}\label{eq normal mapping}
  \nu:\f p\mapsto\left(\partial_{x_1}F(\f p):\cdots:\partial_{x_3}F(\f p)\right),
\end{equation}
i.e., it can be viewed as the~composition of the~Gauss mapping and the~projection from the point $(1:0:0:0)$. It is easily verified that this definition agrees with the~usual construction of the~normal vector on the affine patch $x_0\not=0$.

Unlike the contour the definition of an~isophote depends on the metric of the~ambient space. It is defined as a loci of points where the surface normals encloses a~constant angle with a~fixed vector.  This definition, usual in differential geometry, c.f. \cite{DoYa15,KiLe03,Po84}, is not suitable when attacking the problem with the~algebraic techniques. The reason is that  the isophote would not be an algebraic curve, in this case, but only its half. Hence we modify the definition slightly. The  \emph{isophote} $\av{I}_{\f a,\alpha}$ is the closure of the set
\begin{equation}\label{eq isophote}
  \left\{\f p\in\av{X}_{sm}: (\nu(\f p)\cdot\f a)^2-\alpha^2(\f a\cdot\f a)(\nu(\f p)\cdot\nu(\f p))=0\right\},
\end{equation}
where $\f a=(a_1,a_2,a_3)\in\prR{2}$, is a~given direction, $\phi=\arccos\alpha$ is the angle and $\f x\cdot\f y=\sum_i x_iy_i$.
Hence $\av{I}_{\f a,\alpha}$ is a set of points on the surfaces where the~normal direction forms  angles $\pm\phi$ with the~direction $\f a$. Considering $\f a$ to be a point in $\omega$, i.e. $\f a=(0:a_1:a_2:a_3)$ and  letting $\alpha=\cos\pi/2=0$ we obtain exactly a contour curve $\av{C}_\f a=\av{I}_{\f a,0}$.

In geometric modelling, curves and surfaces are usually given by their polynomial or rational parameterization. Hence they are special instances of a wide class of algebraic varieties. Since contours and isophotes on algebraic surfaces are algebraic curves as well it makes sense to study them via techniques from algebraic geometry.  Because of the used methods we will replace the field $\R$ in definitions by $\C$. It is clear that \eqref{eq contour} and \eqref{eq isophote} still make a perfect sense. However in engineering applications one is more interested in results about real surfaces -- algebraic surfaces with real dimension two defined be real equations. Hence we will discuss the consequences of our results for real varieties as well.   

Since the paper aims to the contour curves and isophotes on rational ruled surfaces we will recall some basic facts about ruled surfaces,  for more details see e.g. \cite{PoWa01}. A~\emph{rational ruled surface} $\av{R}$ is a surface in $\pr{3}$  generated by a~rational one-parametric family of lines -- the so called \emph{rulings}. Hence it admits a~parameterization
\begin{equation}\label{eq R}
\vek[r](s,t_0,t_1)=t_0\vek[p](s)+t_1\vek[q](s)=(t_0p_0(s)+t_1q_0(s):\cdots:t_0p_3(s)+t_1 q_3(s)), 
\end{equation}
where $p_i(s)$ and $q_i(s)$ are polynomials. The rulings on the surface are parametric curves corresponding to a fixed parameter$s$. The rational curves $\f p(s)=\f x(s,1,0)$ and $\f q(s)=\f x(s,0,1)$ intersect a generic ruling exactly once. A curve on $\av{R}$ with this property is called a~\emph{section} and it can be seen that each section is a rational curve. In fact for every rational ruled surface $\av{R}$ in $\pr{3}$ there exist  numbers $m,n$ such that $\av{R}$ is a projection of  ruled surface $\hat{\av{R}}\subset\pr{2m+n+1}$ parameterized as 
\begin{equation}
\left(t_0:t_0s:\dots:t_0s^m:t_1:t_1 s:\dots t_1 s^{m+n}\right)
\end{equation}
The projection is birational and preserves the degrees of curves whose images are not contained in the singular locus of $\av{R}$. Two sections on $\hat{\av{R}}$ of degrees $m$ and $m+n$ have the minimal possible  degrees   and the degree of the surface $\av{R}$ is then $2m+n$. 

A~rational parametrization of the surface $\av{R}$ can be obtained by joining the~corresponding points on arbitrary two sections by the~line. 
\begin{proposition}\label{prp coherent parameterizations}
  Let $\f p(u)$ and $\f q(v)$ be proper parameterizations of two sections on ruled surfaces. Then there exist reparameterizations $\phi(s)$ and $\psi(s)$, such that $t_0\f p(\phi(s))+t_1\f q(\psi(s))$ parametrizes the surface. Moreover $\phi$ and $\psi$ can be chosen to be linear fractional transformations, i.e., in the rational functions $\frac{\alpha s+\beta}{\gamma s+\delta}$.
\end{proposition}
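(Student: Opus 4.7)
The plan is to exploit the natural rational projection $\pi\colon\av{R}\dashrightarrow B$ onto the base $B\cong\pr{1}$ of the ruling, which sends a surface point to the index of the ruling through it. Since each section meets a generic ruling in exactly one point, the restriction of $\pi$ to any section is a morphism of degree one, hence birational.

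First I would fix a rational coordinate $s$ on $B$ and form the two compositions $f(u):=\pi(\f p(u))$ and $g(v):=\pi(\f q(v))$. By properness, $\f p$ and $\f q$ are themselves birational maps $\pr{1}\dashrightarrow\av{R}$ onto their respective sections, so $f$ and $g$ are composites of degree-one rational maps and therefore birational self-maps of $\pr{1}$. Since $\pr{1}$ is smooth and complete, every such birational map extends to an automorphism, and the automorphism group of $\pr{1}$ consists precisely of linear fractional transformations. Setting $\phi:=f^{-1}$ and $\psi:=g^{-1}$ therefore yields Möbius transformations of the required form.

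Second, I would verify that $\vek[r](s,t_0,t_1)=t_0\f p(\phi(s))+t_1\f q(\psi(s))$ is indeed a parametrization of $\av{R}$. By construction $\pi(\f p(\phi(s)))=s=\pi(\f q(\psi(s)))$, so for each $s$ the two points $\f p(\phi(s))$ and $\f q(\psi(s))$ lie on the common ruling over $s$; consequently $\vek[r]$ sweeps out that ruling as $(t_0:t_1)$ varies, and as $s$ ranges over $B$ every ruling of $\av{R}$ is covered.

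The only nontrivial step is the assertion that $\phi$ and $\psi$ are linear fractional. It rests on the degree computation (degree of a composition equals the product of degrees, which gives one) together with the standard fact that birational self-maps of $\pr{1}$ are automorphisms; everything else is a matter of unwinding the definitions.
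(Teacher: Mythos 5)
Your argument is correct. Note that the paper states Proposition~\ref{prp coherent parameterizations} without any proof, treating it as a known fact about rational ruled surfaces, so there is no internal argument to compare against; your write-up supplies the missing justification and does so along the natural lines. The key chain --- the ruling projection $\pi\colon\av{R}\dashrightarrow\pr{1}$ restricts to a degree-one, hence birational, map on each section because a section meets a generic ruling exactly once; composing with the birational parameterizations $\f p$, $\f q$ gives birational self-maps of $\pr{1}$; and every birational self-map of $\pr{1}$ extends to an element of $\aut(\pr{1})\cong\mathrm{PGL}_2(\C)$, i.e., a linear fractional transformation --- is exactly right. Two small points are worth making explicit. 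First, the two sections must be distinct curves (implicit in the statement): only then are $\f p(\phi(s))$ and $\f q(\psi(s))$ distinct for all but finitely many $s$, so that they genuinely span the ruling over $s$ and $t_0\f p(\phi(s))+t_1\f q(\psi(s))$ sweeps out a dense subset of the surface rather than collapsing onto a curve. Second, ``morphism of degree one'' should read ``dominant rational map of degree one'': $\pi$ need not be defined at every point of a section lying in the singular locus of $\av{R}$, but this does not affect the degree count, since a degree-one dominant rational map between irreducible curves is birational and a birational self-map of the smooth complete curve $\pr{1}$ extends to an automorphism.
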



A~section of a~special interest on $R$ is a~section by a plane. If we choose $\f q(s)=(0:q_1(s):q_2(s):q_3(s))$ to be a parameterization of the~section by $\omega$, then \eqref{eq R} provides a~parameterization of the~affine piece of the surface in the usual form
\begin{equation}\label{eq R affine}
  R(s,t)=P(s)+t\overrightarrow{q}(s).
\end{equation}

Denote $\f x=(x_0:x_1:x_2:x_3)$ and let $\dot{\f p}$, $\dot{\f q}$ be the derivatives of $\f p$ and $\f q$, respectively. Then the tangent plane at the~point $\f r(s,t_0,t_1)$ is spanned by $\f p(s)$, $\f q(s)$ and $t_0\dot{\f p}(s)+t_1\dot{\f q}(s)$, i.e. it possesses an equation 
\begin{equation}\label{eq tangent planes}
  t_0\det[\f x,\f p(s),\f q(s),\dot{\f p}(s)]+t_1 \det[\f x,\f p(s),\f q(s),\dot{\f q}(s)]=0.
\end{equation}
Assume $\f p(s_0)\not=\f q(s_0)$ then the ruling corresponding to $s=s_0$ is called \emph{regular}, \emph{torsal} or \emph{singular}  if $\rank[\f p(s_0),\dot{\f p}(s_0),\f q(s_0),\dot{\f q}(s_0)]$ equals to  $4$, $3$, or $2$, respectively -- see \cite{PoWa01} for the~detailed description. On a~non-developable surface there is at most finitely many non-regular rulings.  Since \eqref{eq tangent planes} is linear in $t_0$ and $t_1$ the tangent planes along a regular ruling form a line in the dual space. It turns out that dual surface of a~non-developable ruled surface is a~ruled surface as well. Moreover it is known that $\deg\av{R}^\vee=\deg(\av{R})$.

\section{Contour curves}
\subsection{Contour curves in general}

As already mentioned, curves and surfaces in geometric modelling are usually given by their rational parameterizations. Hence for a given surface one would like to have a formula for a rational parameterization of its contours. Conversely in \cite{BiLa13} the rational contour curves were used to produce  rational parametrizations of canal surfaces. So the first question to ask is how many rational surfaces possesses rational contours. Let us start with a simple example. 

Quadratic patches are one of the simplest classes of rational parametric surfaces used in geometric modelling. These are the~projections of Veronese surface in $\pr{5}$ to $\pr{3}$. Depending on the projection (or equivalently on the number of base points of the~resulting parameterization) the quadratic patch parametrizes one of the following surfaces:
\begin{enumerate}
  \item quadric,
  \item ruled cubic with double line,
  \item Steiner surface  (of degree 4).
\end{enumerate}
As we will see, the contour curves on regular quadrics are rational and the same is true for ruled cubic surfaces. However a generic projection of the~Veronese surface and thus almost all quadratically parametrized surfaces in $\pr{3}$ are Steiner quartics. Their generic\footnote{Generic here means that there exists a Zariski open set $U\subset\pr{3}$ such that for all $\f a\in U$ the contour has the~genus $1$. Hence there still can exist rational contour curves on the surface, but their set has the~codimension at least one in $\pr{3}$.} contour curves are elliptic curves  and thus they  are not rational. Hence even a~very simple surfaces do not posses  contours parametrizable by the~standard techniques used in CAGD. It is already known that contours on rational ruled surfaces are rational, see e.g. \cite{PePoRa99}. The following theorem completes the lists of all such surfaces.

\begin{theorem}\label{thm surf with rational }
  A generic contour curve on a surface in $\pr{3}$ is rational if and only if the surface is rational ruled or the~Cayley cubic, i.e., rational cubic surface with four double points.
\end{theorem}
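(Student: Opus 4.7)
The plan is to translate the rationality of $\av{C}_\f a$ into a statement about the sectional genus of the dual surface $\av{X}^\vee$, apply the classical classification of surfaces in $\pr{3}$ with rational plane sections, and dualize back.

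By Remark~\ref{rem developable} I may assume $\av{X}$ is non-developable. Biduality $(\av{X}^\vee)^\vee=\av{X}$ then forces the Gauss map $\gamma:\av{X}\dashrightarrow\av{X}^\vee$ to be birational onto its image. Combining \eqref{eq gauss mapping} with \eqref{eq contour}, $\gamma$ sends $\av{C}_\f a$ onto the hyperplane section $H_\f a\cap\av{X}^\vee$, where $H_\f a\subset(\pr{3})^\vee$ is the hyperplane parametrising planes through $\f a$. Birationality of $\gamma$ thus gives a birational map $\av{C}_\f a\dashrightarrow H_\f a\cap\av{X}^\vee$ for generic $\f a$, and the problem reduces to characterising those surfaces $\av{X}^\vee\subset(\pr{3})^\vee$ whose generic hyperplane section is a rational curve, i.e.\ those of sectional genus zero.

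I would then invoke the classical classification: surfaces in $\pr{3}$ of sectional genus zero are either (a) rational ruled surfaces, or (b) Steiner-type quartics, i.e.\ projections of the Veronese surface $V_2\subset\pr{5}$. Dualizing back, in case (a) the discussion after \eqref{eq tangent planes} (together with the observation that $\gamma$ bijects the rulings of $\av{X}$ with those of $\av{X}^\vee$) shows that $\av{X}=(\av{X}^\vee)^\vee$ is itself a non-developable rational ruled surface. In case (b) the projective dual of a Steiner quartic is classically known to be the Cayley cubic, producing the exceptional family in the statement. The converse is verified directly: on a rational ruled surface, \eqref{eq tangent planes} is linear in $(t_0,t_1)$, so the tangent planes along any regular ruling form a pencil and $\av{C}_\f a$ meets each such ruling in a single point, i.e.\ $\av{C}_\f a$ is a section and hence rational by Proposition~\ref{prp coherent parameterizations}; on the Cayley cubic all partials of $F$ vanish at each of its four nodes, forcing $\av{C}_\f a=\av{X}\cap\{\sum a_i\partial_i F=0\}$ to acquire an ordinary double point at every node, which drops its arithmetic genus $4$ (computed by adjunction on a smooth cubic) down to geometric genus $0$.

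The main obstacle is the classification of $\pr{3}$-surfaces with sectional genus zero. Since a smooth surface of degree $d$ has sectional genus $\binom{d-1}{2}$, every higher-degree candidate must carry a one-dimensional singular locus, and one must carefully track the genus reduction contributed by double curves, pinch points and similar strata in order to exclude any surface beyond rational scrolls and Steiner quartics. A secondary technical issue is that $\gamma$ is undefined at the singularities of $\av{X}$, so in cases such as the Cayley cubic the identification $\av{C}_\f a\dashrightarrow H_\f a\cap\av{X}^\vee$ must be interpreted birationally, with any components of $\av{C}_\f a$ supported on $\mathrm{Sing}(\av{X})$ handled separately in the genus computation.
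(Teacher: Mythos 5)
Your proposal follows essentially the same route as the paper: both identify $\av{C}_\f a$ with a hyperplane section of $\av{X}^\vee$ via the Gauss map, use biduality to reduce to classifying surfaces in $\pr{3}$ with rational plane sections, and then dualize the two resulting classes (rational ruled surfaces and the Steiner quartic, whose dual is the Cayley cubic). The classification you flag as the ``main obstacle'' is exactly the classical result the paper simply cites from Griffiths--Harris, and your added verifications of the converse (sections on ruled surfaces, the genus drop $4\to 0$ at the four nodes of the Cayley cubic) are correct but not needed beyond what the paper records.
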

\begin{proof}
For a given $\f a\in\pr{3}$ a point $\f p\in\av{X}_{sm}$ is contained in the contour $\av{C}_\f a$ if and only if $\gamma(\f p)=(y_0:y_1:y_2:y_3)$ fulfils 
\begin{equation}
  y_0 a_0+y_1 a_1+y_2a_2+y_3a_3=0,
\end{equation}
i.e., contour curves are mapped to plane sections of $\av{X}^\vee$ via the~Gauss map. Since $\gamma$ is birational and $(\av{X}^\vee)^\vee=\av{X}$ by the~reflexivity theorem (see e.g. \cite[p. 208]{Ha92}), the surfaces with rational contour curves are exactly the duals of surfaces with rational plane sections. Such a surface is well known to be a~projection  of a~rational ruled surface or the~Steiner surface, see \cite{GrHa78}. As already mentioned, the dual of rational (non-developable) ruled surface is again a~rational ruled surface. The dual of the~Steiner surface is the~Cayley cubic -- the cubic surface with four ordinary double points see \cite[p. 449]{Do12}.
\end{proof}

If $F(x_0,\dots,x_3)$ is the defining polynomial of a~surface $\av{X}$ then the first polar with respect to point $\f a\in\pr{3}$ is the surface with the equation $\sum_{i=0}^3 a_i \partial F/\partial x_i=0$. The contour curve $\av{C}_\f a$ is nothing but the closure of the intersection of $\av{X}_{sm}$ with the polar surface. If $V(I)$ denotes the variety associated to an~ideal $I$ then there is the~well known identity $\overline{V(I)\backslash V(J)}=V(\sqrt{I}:J)$. Because the ideal of the~singular locus is generated by all the partial derivatives of $F(\f x)$ we arrive at the~ideal of  $\av{C}_\f a$
\begin{equation}\label{eq impl eq of contour}
 \sqrt{\left\langle F,\sum a_i \frac{\partial F}{\partial x_i}\right\rangle} : \left\langle \frac{\partial F}{\partial x_0},
 \frac{\partial F}{\partial x_1},
 \frac{\partial F}{\partial x_2},
 \frac{\partial F}{\partial x_3}\right\rangle.
\end{equation}

Conversely, let $\av{C}\subset\pr{3}$ be a curve defined as a~simultaneous solution of homogeneous equations $G_i(x_0,x_1,x_2,x_3)=0$ for $i=1,\dots,m$ and let $\f a\in\pr{3}$ be a fixed point. A surface $\av{X}$ (not necessarily rational and ruled at this moment)  containing the~curve $\av{C}$ as a~contour curve w.r.t. a~point $\f a$ possesses a defining polynomial $F=\sum_{i=1}^mH_iG_i$ for some homogeneous polynomials $H_i$. The following relation ensures that $\f a\in T_\f p\av{X}$
\begin{equation}\label{eq implicit contour}
  \sum_{j=0}^3\frac{\partial F(\f p)}{\partial x_j}a_j=\sum_{j=0}^3\sum_{i=1}^m H_i(\f p)\frac{\,\partial G_i(\f p)}{\partial x_j}a_j=0
\end{equation}
for each $\f p\in\av{C}$. Hence the expression defined by \eqref{eq implicit contour} must be contained in the ideal generated by $G_i$ and thus there exist homogeneous polynomials $L_i$ for $i=1,\dots, m$ such that

\begin{equation}\label{eq syzygy condition}
  \sum_{i=1}^m H_i\left(\sum_{j=0}^3\frac{\partial G_i}{\partial x_j}a_j\right)-\sum_{i=1}^m L_iG_i\equiv 0,
\end{equation}
The polynomials $(H_0,\dots,H_3,L_0,\dots,L_3)$ fulfilling \eqref{eq syzygy condition} forms the~so called syzygy module, see e.g. \cite{CLO05}.  Nevertheless restricting the attention to the surfaces of a~fixed degree reduces the problem to the system of linear equations. We illustrate this on the~example. 

\begin{example}\label{ex cubic}
Let $\av{C}: x_0^2-x_1^2-x_2^2+x_3^2=x_1^2+x_2^2+x_3^2-2x_0x_1=0$ be the complete intersection of two quadrics, i.e., an elliptic curve of degree 4. And let $\f a=(1:-1:0:-1)$. The goal is to find all cubic surfaces with $\av{C}$ as a contour curve w.r.t. $\f a$. The defining equation of $\av{X}$ can be written as $(x_0^2-x_1^2-x_2^2+x_3^2)H_1+(x_1^2+x_2^2+x_3^2-2x_0x_1)H_2$, where $H_1$ and $H_2$ are linear forms.  Set $H_1=\sum_{i=0}^3\alpha_ix_i$ and $H_2=\sum_{i=0}^3\beta_ix_i$. The polynomials $L_1$ and $L_2$ then must have degree 0, i.e. $L_1=\gamma$ and $L_2=\delta$. substituting to \eqref{eq syzygy condition} leads to
\begin{equation}
\begin{array}{l}
(-2\alpha_0+2\beta_0-\gamma)x_0^2+
( -2\alpha_1 -   4\beta_1 + \gamma + \delta)x_1^2+
(\gamma + \delta)x_2^2+
 (2 \alpha_3 - 2 \beta_3 - \gamma + \delta)x_3^2+  \\
  ( -2 \alpha_0 -  2 \alpha_1 - 4 \beta_0 + 2 \beta_1 - 2 \delta)x_0x_1+
 ( -2 \alpha_2 +  2 \beta_2) x_0x_2+
( 2 \alpha_0 - 2 \alpha_3 - 2 \beta_0 + 2 \beta_3)x_0x_3+\\
( -2 \alpha_2 -  4 \beta_2)x_1x_2+
 (2 \alpha_1 - 2 \alpha_3 - 2 \beta_1 - 4 \beta_3) x_1x_3+
( 2 \alpha_2 - 2 \beta_2)x_2x_3\equiv0.
\end{array}
\end{equation}
This system of 10 linear equations in variables $\alpha_i,\beta_i,\gamma,\delta$ has the~unique solution (up tu a~scalar multiple)  $H_1=x_0-2x_1-x_3$ and $H_2=x_0+x_1-x_3$. Thus there exists a unique cubic surface with a~given contour -- namely the surface 
\begin{equation}
  F=H_1G_1+H_2G_2=-x_0^3 - x_1^3 + 2 x_0 x_2^2 - x_1 x_2^2 + x_0^2 x_3 + 2 x_0 x_1 x_3 - 2 x_1^2 x_3 - 2 x_2^2 x_3 + 
 3 x_0 x_3^2=0.
\end{equation} 
Let us emphasize that the curve $\av{C}$ may be only a~component of the whole contour curve $\av{C}_\f a\subset\av{X}$.  In our case, the surface $\av{X}$ is non-singular and thus by \eqref{eq impl eq of contour} the contour with respect to $\f a$ is a common solution of
\begin{equation}
  F(x_0,x_1,x_2,x_3)=\sum_{i=0}^3\frac{\partial F(x_0,x_1,x_2,x_3)}{\partial x_i}a_i=0,
\end{equation} 
where $\f a=(a_0:\dots:a_3)$. B\'{e}zout theorem tells us that the degree of $\av{C}_\f a$ is six. Indeed in our particular case $\av{C}_\f a$ is the~union of $\av{C}$ and two lines $(s_0: s_1:\pm\sqrt{7/5} (s_0 + s_1): s_0 + 2 s_1)$, see Fig.~\ref{fig surf with given cont}.

\begin{figure}[t]
\begin{center}

  \includegraphics[width=0.5\textwidth]{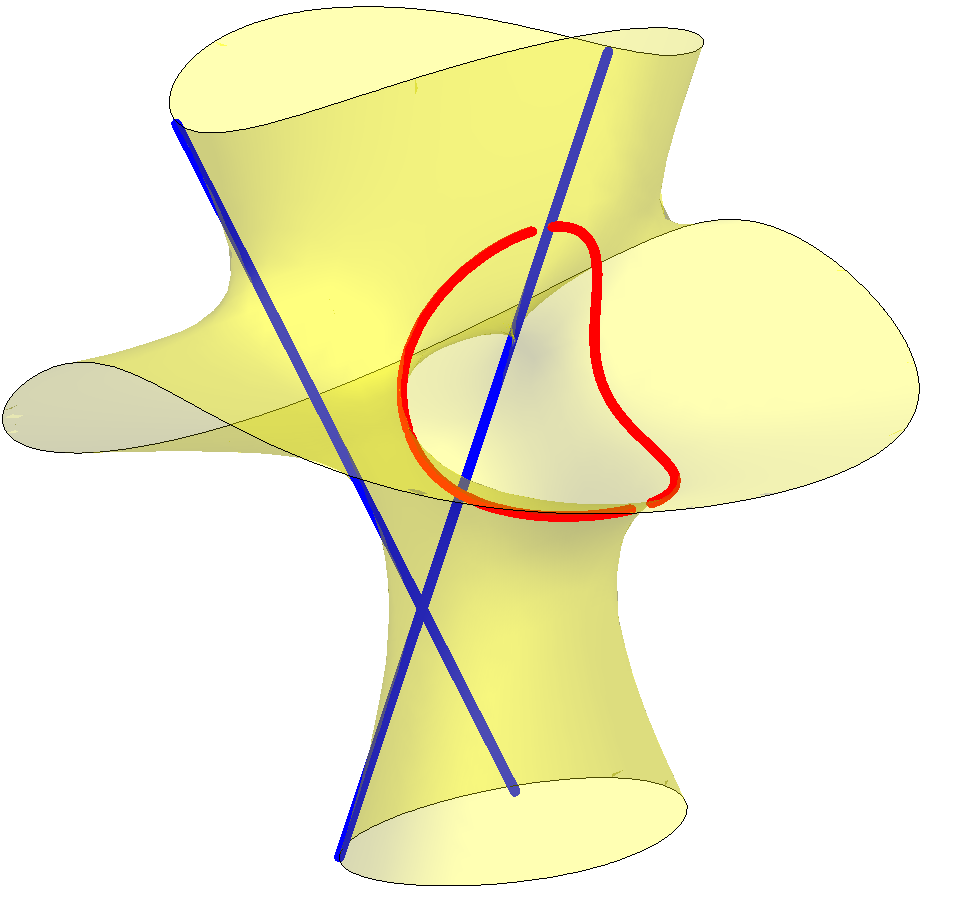}
\begin{minipage}{0.9\textwidth}
\caption{Cubic surface whose contour consists of curve $\av{C}: G_1=G_2=0$ (red) and two lines (blue). \label{fig surf with given cont}}
\end{minipage}
\end{center}
\end{figure}

\end{example}

The above approach can be used to find surfaces containing more than one contour lines.  Hence it is perfectly reasonable to ask how many contours determine a surface and how to reconstruct it from the given set. Clearly the answer will generally depend on the degree of the sought surface. Note that the introduced method of a~surface reconstruction has some drawbacks. As we have already seen, the given curve may be only a part of the~contour on the resulting surface. Second, let $F=0$ be the~equation of the above cubic surface and $G_1,G_2$ be two quadrics intersecting at $\av{C}$ from the Example~\ref{ex cubic}. If we solve \eqref{eq syzygy condition} for a~quartic surfaces then the set of solutions will contain polynomials $F\cdot H$, where $H$ is an~arbitrary linear form or the product $G_1\cdot G_2$. In fact the system of equations cannot be used to distinguish between regular solutions and the surfaces which are reducible or contain $\av{C}$ as a singular curve.

As usually the better knowledge of the geometry of sought surface can simplify the problem significantly. Hence one of our goals is  to provide the answer for the class of ruled surfaces.

\begin{problem}\label{prob contours}
  Determine  numbers $c(k)$ such that a rational ruled surface of degree $k$ is determined by $c(k)$ contour curves.
\end{problem}

The analogous problem is obtained by replacing the~contour curves by the~silhouettes. Recall (see Fig.~\ref{fig contour}) that the silhouette  can be understood as the~``boundary''  of the surface projected from the center $\f a$ to some chosen plane. Hence the~following problem is motivated by a~reconstruction of a~surface from its two--dimensional images.

\begin{problem}\label{prob outlines}
   Determine  numbers $s(k)$ such that a rational ruled surface of degree $k$ is determined by $s(k)$ silhouettes.
\end{problem}


\subsection{Contour curves on quadrics}
 
Although all quadrics are ruled surfaces we will treat them separately. There are two reasons for this. First, we can solve Problems \ref{prob contours} and \ref{prob outlines} directly without any~reference to the rulings. And second, the~quadrics are a~typical illustration of the~drawback of an~approach via complex numbers. Indeed the real part of the~sphere, paraboloid, etc. contains no line.  So one cannot consider them to be ruled surfaces from the point of view of real geometry. Fortunately we may prove

\begin{proposition}
  Real rational ruled surface contains one--parametric set of real lines or it is a quadric.
\end{proposition}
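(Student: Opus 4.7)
The plan is to trace the real structure through the curve of rulings in the Grassmannian and to exploit the classical fact that only planes and quadrics can be ruled by more than one family of lines.

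Let $\av{R}\subset\pr{3}$ be a real rational ruled surface (non-developable by our standing assumption). Planes trivially contain a one-parametric set of real lines, and quadrics are covered by the second alternative of the proposition, so we may assume $\av{R}$ is neither a plane nor a quadric. Consider the (complex) family of rulings of $\av{R}$, parametrized by a rational curve $C\subset\gr(1,3)$. Since $\av{R}$ carries a unique one-parameter family of lines (a classical consequence of the non-quadric, non-plane hypothesis), complex conjugation preserves this family, and $C$ inherits a real structure, becoming a real rational curve in $\gr(1,3)$. A real smooth rational curve is either isomorphic over $\R$ to $\pr{1}$ (the ``split'' form) or to a smooth conic without real points (the ``twisted'' form). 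In the split case, $C(\R)$ is a real algebraic curve, and each of its points corresponds to a ruling of $\av{R}$ defined over $\R$, giving the sought one-parametric set of real lines.

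It remains to exclude the twisted case. There, no ruling of $\av{R}$ is defined over $\R$, so each ruling $\ell$ is paired with a distinct conjugate ruling $\bar\ell\neq\ell$. For every real point $P\in\av{R}$ we have $P=\bar P$, so if $P\in\ell$ then also $P\in\bar\ell$, and hence through $P$ pass two distinct rulings. Because $\av{R}$ has a unique family of rulings, a general point of $\av{R}$ lies on exactly one ruling, so such $P$ must belong to a proper closed subvariety $D\subset\av{R}$ of dimension at most one, namely the locus of points lying on several rulings (essentially the ``double curve'' of $\av{R}$). The real locus $\av{R}(\R)$ is then contained in $D(\R)$, hence has real dimension at most one, contradicting the assumption that $\av{R}$ is a real algebraic surface with a two-dimensional real locus. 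Thus the twisted case does not occur, and the split case supplies the required family of real lines.

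The two key ingredients are (i) the classical uniqueness of the family of rulings on any non-developable ruled surface that is neither a plane nor a quadric, and (ii) the classification of real forms of $\pr{1}$. The main technical obstacle is in the twisted case: one must rule out that conjugate pairs of rulings from the same family could account for a full two-dimensional real locus, and this is secured by the fact that two generic rulings on a non-quadric surface are skew, so intersections of conjugate pairs are confined to a lower-dimensional subvariety — a phenomenon that indeed occurs for quadrics (where conjugate rulings lie in the two distinct families and always meet), thereby explaining why quadrics are precisely the exceptional case.
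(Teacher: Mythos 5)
Your argument is correct and rests on the same central mechanism as the paper's proof: a real point of the surface lies on a ruling and simultaneously on its complex conjugate, and if these two lines are generically distinct, the real locus is trapped in a one-dimensional subvariety, contradicting the convention that a real surface has real dimension two. The packaging, however, differs in two respects. First, you run the argument in the contrapositive (assume non-quadric, non-plane, and derive a contradiction in the ``twisted'' case), organizing the case split via the two real forms of $\pr{1}$ acting on the parameter curve $C\subset\gr(1,3)$; the paper instead works directly with the incidence variety $\Gamma\subset\av{X}\times\gr(1,3)$ and shows $\deg\pi\geq 2$. Your framing is arguably tidier, but note that $C$ need not be smooth, so the real-forms dichotomy should be applied to its normalization; a conjugation-fixed singular point of $C$ could still produce finitely many real rulings in the twisted case, which is harmless for the dimension count but deserves a sentence. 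Second, you cite as classical both the uniqueness of the one-parameter family of rulings and the fact that a general point lies on exactly one ruling (via skewness of generic rulings); this is precisely the step the paper takes the trouble to establish, by showing that a surface with at least two lines through each point carries two degree-one sections and hence has degree at most two. So your proof is a legitimate variant that buys a cleaner case analysis at the price of delegating to the literature the one substantive classification step that the paper proves.
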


This result sounds so classically that it must be known already. However we did not find it in the~literature and thus we present its~proof for the sake of completeness.
 
\begin{proof}
Since $\av{X}$ is ruled it contains a family of complex lines. For  each such a line,  $\av{X}$ contains the complex conjugate line as well. Since the lines generating a surface moves in a complex family, its real dimension  is two and thus a generic real point is an intersection of two conjugate lines. Now we prove that through each (complex) point of the surface pass at least two lines. Let
\begin{equation}
  \Gamma:=\{(\f p,\av{L})\in\av{X}\times\gr(1,3)\ \mid \f p\in \av{L}\subset\av{X}\}.
\end{equation}
Denote by $\pi:\Gamma\rightarrow\av{X}$ the projection onto the first factor. The degree of the projection $\pi$  measures how many lines pass through a generic point of the surface. Since we know that for $\f p\in\av{X}(\R)$ the cardinality of the fiber is at least two and $\av{X}(\R)$ is not contained in any algebraic subset of dimension one on $\av{X}$ we conclude that there must exist a Zariski open subset of $\av{X}$ with the same cardinality. And thus $\deg\pi\geq 2$.

Now it remains to prove that a surface with at least two lines through each point is a plane or a quadric. If $\av{X}$ contains the one-dimensional family of lines and no other line then through a non-singular point there passes only one line. so there must exist at least two lines $\av{P,Q}\subset\av{X}$ not belonging to the family and intersecting the members of the family in one point (this follows from the intersection product on the linear normalization of $\av{X}$). Hence we have two sections of degree one on $\av{X}$  and the degree of the ruled surface is two or one depending whether lines $\av{P}$ and $\av{Q}$ are skew or intersecting.

Since real plane always contains real lines we conclude that the only possible real ruled surfaces without real family of lines are quadrics.
\end{proof}


According to Remark~\ref{rem developable} we will consider regular quadrics only. The~contour w.r.t. a~point $\f a$ is then the~intersection of the quadric with its polar plane w.r.t. the~point $\f a$.  It is a~regular conic section whenever $\f a$ is not a point on the quadric, or it consists of two lines otherwise. This confirms that a generic contour curve of a regular quadric is a rational curve.

Conversely let a  conic section $\av{C}$ be given as the intersection of a quadric $G(\f x)=0$ and a plane $H(\f x)=0$. The cone joining the~point $\f a$ and the~conic section $\av{C}$ (We will use notation $\f a\#\av{C}$ for such a cone.) possesses a~quadratic equation $F(\f x)=0$ obtainable from the system 
from \begin{equation}
  \f x=s \f a+t \f y, \quad G(\f y)=0\quad\text{and}\quad H(\f y)=0,
\end{equation}
after elimination of variables $\f y=(y_0:\dots: y_3)$, $s$ and $t$. 

\begin{lemma}\label{lem eqn of quadric}
  With the~above notation, the equation of any~quadric containing a conic section $\av{C}$ as a contour w.r.t. a point $\f a$ can be written as
\begin{equation}\label{eq quadric with given contour}
  \lambda F(\f x)+\mu H^2(\f x)=0,
\end{equation}
where $\lambda,\mu\in\C$ are free parameters.
\end{lemma}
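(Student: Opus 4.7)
The plan is to parametrize all quadrics through $\av{C}$ linearly and then impose the contour condition as a linear condition on this family. I would first observe that since $\av{C}=V(G,H)$ and $Q$ is a homogeneous polynomial of degree $2$ vanishing on $\av{C}$, a degree count in the ideal $(G,H)$ forces $Q$ to have the form
\begin{equation*}
Q = \alpha G + \ell H,\qquad \alpha\in\C,\ \ell\text{ linear}.
\end{equation*}
This reduces the problem to a $5$-dimensional linear system of quadrics, parametrized by $\alpha$ and the four coefficients of $\ell$.

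Next I would translate the contour condition into a linear constraint on $(\alpha,\ell)$. A point $\f p\in\av{C}$ lies on the contour precisely when the linear polar form $P_Q := \sum_i a_i\,\partial_{x_i}Q$ vanishes at $\f p$. A direct computation gives
\begin{equation*}
P_Q = \alpha\, G_{\f a} + H(\f a)\,\ell + \ell(\f a)\, H,\qquad G_{\f a}:=\sum_i a_i\,\partial_{x_i}G.
\end{equation*}
Because a non--degenerate conic section $\av{C}$ spans the unique plane $H=0$, the only linear forms vanishing on $\av{C}$ are scalar multiples of $H$. Thus $P_Q\equiv 0\pmod{H}$, which, after dropping the multiple--of--$H$ terms, yields the single relation $\alpha\,G_{\f a} + H(\f a)\,\ell\equiv 0\pmod{H}$. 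Assuming the generic case $H(\f a)\neq 0$ (i.e.\ $\f a$ does not lie in the plane of $\av{C}$), this pins down $\ell$ modulo $H$:
\begin{equation*}
\ell = -\frac{\alpha}{H(\f a)}\,G_{\f a} + \mu H,\qquad \mu\in\C.
\end{equation*}

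The last step is to identify the resulting two--parameter family with $\{\lambda F+\mu H^2\}$. For this I would compute $F$ explicitly by intersecting the line $\f x=s\f a+t\f y$ with $\av{C}$ and eliminating $\f y,s,t$; using $G(\f y)=0$ and $H(\f y)=0$ together with the symmetric bilinear expansion of $G$ one obtains
\begin{equation*}
F(\f x)\;\sim\; H(\f a)^2\,G(\f x) - H(\f a)\,G_{\f a}(\f x)\,H(\f x) + G(\f a)\,H(\f x)^2.
\end{equation*}
Substituting $\ell$ into $Q=\alpha G+\ell H$ and setting $\lambda:=\alpha/H(\f a)^2$ absorbs the $G$-- and $G_{\f a}H$--terms into $\lambda F$, while the remaining $H^2$ contributions collect into a single scalar multiple of $H^2$, establishing \eqref{eq quadric with given contour}.

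The main obstacle I anticipate is the verification of the closed form for $F$ and the clean bookkeeping of the $H^2$-coefficients after substitution; the degenerate situations (where $\av{C}$ is a double line, or $\f a$ lies in the plane $H=0$ so that the cone $\f a\#\av{C}$ collapses to $H^2$) require a separate comment, but they correspond exactly to non--generic configurations in which the quadric fails to have $\av{C}$ as its full contour and can be excluded under the implicit hypothesis that $\av{C}$ is the contour of $Q$ rather than merely a subscheme of it.
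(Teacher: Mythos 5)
Your proof is correct, but it takes a genuinely different route from the paper's. The paper works with the generators $F$ (the cone $\f a\#\av{C}$) and $H$ of the ideal of $\av{C}$, writes the unknown quadric as $\lambda F+LH$ with $L$ linear, and then argues geometrically: the contour condition forces the quadric and the cone to share tangent planes along $\av{C}$, so their intersection is $\av{C}$ counted twice; since that intersection is cut out on the cone by the two planes $H=0$ and $L=0$, the plane $L=0$ must coincide with $H=0$, i.e.\ $L=\mu H$. You instead parametrize quadrics through $\av{C}$ by the original generators $G$ and $H$, impose the contour condition as the vanishing of the polar form $P_Q$ on $\av{C}$ (correctly reduced, via the fact that a linear form vanishing on a nondegenerate conic is a multiple of $H$, to $\ell\equiv-\alpha G_{\f a}/H(\f a)\bmod H$), and then match the resulting family against $\lambda F+\mu H^2$ by computing $F$ explicitly; your closed form $F\sim H(\f a)^2G-H(\f a)G_{\f a}H+G(\f a)H^2$ checks out (it is $G\bigl(H(\f x)\f a-H(\f a)\f x\bigr)$ expanded via the polar bilinear form), and the final bookkeeping with $\lambda=\alpha/H(\f a)^2$ is consistent. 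The paper's argument is shorter and coordinate-free but leans on an intersection-multiplicity claim; yours is more computational and self-contained, at the price of needing $H(\f a)\neq 0$ explicitly --- which, as you note, is harmless here since $\f a$ lies in the plane of its own contour only when $\f a$ is on the quadric, a case excluded in the surrounding discussion.
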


\begin{proof}
The ideal of $\av{C}$ is generated by $F$ and $H$. Denote the equation of the~sought quadric $\av{Q}$ by $G(\f x)=0$. Since $\av{C}$ is contained in $\av{Q}$ it is possible to write $G(\f x)=\lambda F(\f x)+ L(\f x)H(\f x)$, where $\lambda\in\C$ and $L(\f x)$ is a~linear form. Moreover the cone $\f a\#\av{C}$ shares the tangent plane with $\av{Q}$ at every point of $\av{C}$. It follows that the intersection of  $\av{Q}$ with $\f a\#\av{C}$ is the conic section $\av{C}$ counted twice. At the same moment this intersection is nothing but section of the cone by two planes $H(\f x)=0$ and $L(\f x)=0$. Hence it must be $L(\f x)=\mu H(\f x)$.
\end{proof}

Since the equation of a~quadric has 10 coefficients and it is unique up to multiplication by a constant, the set of all quadrics can be identified with $\pr{9}$ in a usual way.
As shown above, the set of the~quadrics with a~given contour forms a~line in this parameter space. Recall that the set of singular quadrics is  a hypersurface of degree 4 in $\pr{9}$.  Hence there are four singular quadrics in each  pencil (not contained in the hypersurface). As usual the proper intersection multiplicities must be taken into account. For example reducible quadrics (union of two planes) form a singular locus of this hypersurface. They are all singular points of multiplicity two except of double planes $H^2(\f x)=0$ which are triple points. Thus double planes count as at least three singular quadrics in the pencil. From this we can see that the only singular quadrics in the pencil \eqref{eq quadric with given contour} are the cone and the double plane.

\begin{theorem}\label{thm quadric two contours}
 Two contour curves determine a quadric uniquely ($c(2)=2$).  
\end{theorem}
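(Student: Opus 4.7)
The plan is to identify $\pr{9}$ with the parameter space of all quadrics in $\pr{3}$ and to translate the claim into a statement about the intersection of two lines in $\pr{9}$. Given two distinct contour curves $\av{C}_1, \av{C}_2$ of some regular quadric $\av{Q}$ with respect to viewpoints $\f a_1, \f a_2$, Lemma~\ref{lem eqn of quadric} states that the quadrics having $\av{C}_i$ as a contour w.r.t.\ $\f a_i$ form a pencil
\begin{equation*}
  P_i=\{\lambda F_i+\mu H_i^2:(\lambda:\mu)\in\pr{1}\}\subset\pr{9},
\end{equation*}
that is, a projective line. A quadric realising both prescribed contours corresponds to a point of $P_1\cap P_2$, so the theorem reduces to showing that this intersection consists of a single point.

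Since $\av{Q}\in P_1\cap P_2$, the intersection is non-empty, and the task becomes to prove $P_1\ne P_2$; the standard fact that two distinct lines in projective space meet in at most one point then closes the argument. To show the pencils differ, I would compute the base locus of each pencil in $\pr{3}$. A point $\f p$ lies on every member of $P_i$ exactly when $F_i(\f p)=H_i(\f p)=0$, i.e., when $\f p$ lies both on the cone $\f a_i\#\av{C}_i$ and in the plane spanned by $\av{C}_i$. A cone meets the carrier plane of its directrix precisely in that directrix, so the base locus of $P_i$ equals $\av{C}_i$. Because a pencil of quadrics is determined by its base locus, $P_1=P_2$ would force $\av{C}_1=\av{C}_2$, contradicting the assumption that the two contours are distinct.

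I do not anticipate a significant obstacle, since Lemma~\ref{lem eqn of quadric} already does the hard work of cutting the $9$-parameter family of quadrics down to a projective line for each fixed contour; the remaining argument is a short exercise about lines in $\pr{9}$. The only mild point requiring care is the degenerate situation where a viewpoint $\f a_i$ lies on $\av{Q}$ and the contour splits into a pair of lines, but the base-locus computation is unchanged and the same uniqueness conclusion follows.
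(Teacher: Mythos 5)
Your proof is correct in substance, and its key step differs from the paper's. Both arguments share the same skeleton: identify quadrics with points of $\pr{9}$, invoke Lemma~\ref{lem eqn of quadric} to replace each contour by a pencil (a line) $P_i$, note that both lines contain the point corresponding to $\av{Q}$, and reduce everything to showing $P_1\neq P_2$. For that last step the paper argues via the discriminant: the hypersurface of singular quadrics has degree $4$ and a double plane is a triple point of it, so a single pencil containing the two distinct double planes $H_1^2$ and $H_2^2$ would meet the discriminant with multiplicity at least $6>4$, forcing every member to be singular and contradicting the regularity of $\av{Q}$. You instead compare base loci: the base locus of $P_i$ is $V(F_i)\cap V(H_i)=\av{C}_i$ (using that $\f a_i$ is off the plane of $\av{C}_i$, so the cone meets that plane exactly in the directrix), hence $P_1=P_2$ would give $\av{C}_1=\av{C}_2$. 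This is more elementary --- no facts about the discriminant hypersurface are needed. One caveat: your stated justification, ``a pencil of quadrics is determined by its base locus,'' is false here --- the quadrics containing the conic $\av{C}_i$ form a $\pr{4}$, not a line, so the base locus does \emph{not} determine the pencil. Fortunately you only need the trivial converse direction: equal pencils have equal base loci, since the base locus is computed from the pencil. Rephrase that sentence and the argument stands. Your closing remark about the degenerate case $\f a_i\in\av{Q}$ is too quick, though: there the contour is a pair of lines through $\f a_i$, the ``cone'' $\f a_i\#\av{C}_i$ degenerates, and Lemma~\ref{lem eqn of quadric} no longer applies as stated --- but the paper's proof has the same implicit restriction, so this does not distinguish the two arguments.
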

\begin{proof}
 Two lines corresponding to the two contour curves on the same quadric $\av{Q}$ cannot be skew because they both contain a point corresponding to $\av{Q}$. Hence it is enough to prove that they are not identical. Assume that they are the same, then by Lemma~\ref{lem eqn of quadric} we may parametrize the pencils in two ways -- say $\lambda_1 F_1+\mu_1 H_1^2$ and $\lambda_2 F_2+\mu_2 H_2^2$. Since  $H_1=0$ and $H_2=0$ are obviously different planes (otherwise the contours would be the same), we see that they intersect the hypersurface of singular quadrics in at least $3+3+1$ points. This is impossible unless each quadric in the pencil is singular which is a contradiction to the general assumption on regularity of the quadric $\av{Q}$.
\end{proof}


Let be given a conic section $\av{S}_\f a$ and a point $\f a\in\pr{3}$ not contained in the plane of $\av{S}_\f a$. Similarly  to Lemma~\ref{lem eqn of quadric}, it is possible to describe all the quadrics with $\av{S}_\f a$ as silhouettes w.r.t. $\f a$.
In particular, let $F_\f a(\f x)=0$  and $H_\f a(\f x)=0$ be the~defining equations of the cone $\f a\#\av{S}_\f a$ and an arbitrary plane not passing through $\f a$, respectively.  The intersection of the cone with the plane is a regular conic section $\av{C}_\f a$ whose projection from the center $\f a$  is the silhouette $\av{S}_\f a$. Hence $\av{C}_\f a$ can be a contour curve on the sought quadric, we immediately obtain its defining polynomial
\begin{equation}
  Q(\f x)=\alpha F_\f a(\f x)+H_\f a^2(\f x)
\end{equation}
for some nonzero constant $\alpha$. First, we observe that two silhouettes are not enough to reconstruct a quadric. Let $\av{S}_\f b$ be the~second silhouette and use analogous notation as above.  We would like to lift up both silhouettes to obtain contours $\av{C}_\f a$ and $\av{C}_\f b$ which determine the quadric.  Since the contours are conic sections on the common quadric they must intersect in two points $\f p$ and $\f q$. Moreover the~tangent plane to the quadric and thus the~tangent planes to the cones $\f a\#\av{S}_\f a$ and $\f b\#\av{S}_\f b$ at these points must contain both points $\f a$ and $\f b$. This allows to determine $\f p, \f q\in (\f a\#\av{S}_\f a)\cap(\f b\#\av{S}_\f b)$. Hence both planes $H_\f a(\f x)=0$ and $H_\f b(\f x)=0$ belong to the pencil of planes passing through the~line joining $\f a$ and $\f b$.

Assume that we have already found some quadric $Q(\f x)=0$ with the~given silhouettes. Hence we may write
\begin{equation}\label{eq Q outlines}
  Q(\f x)=\alpha F_\f a(\f x)+H_\f a^2(\f x)=\beta F_\f b(\f x)+H_\f b^2(\f x).
\end{equation}
Hence $\alpha F_\f a-\beta F_\f b=(H_\f b+H_\f a)(H_\f b-H_\f a)$ is a singular quadric in the pencil $\lambda F_\f a+\mu F_\f b$.  Moreover it is reducible and thus the pencil contains  singular quadrics for parameter values $(\lambda:\mu)$ equal to $(1:0)$, $(0:1)$ with multiplicity one and  $(\alpha:-\beta)$ with multiplicity two.

From this we finally derive the~equation $P(\f x)=0$ of an arbitrary quadric with the silhouettes $\av{S}_\f a$ and $\av{S}_\f b$.  Let us write it  again as
\begin{equation}\label{eq P outlines}
  P(\f x)=\alpha F_\f a(\f x)+G_\f a^2(\f x)=\beta F_\f b(\f x)+G_\f b^2(\f x),
\end{equation}
where $\alpha$, $\beta$ are same as in~\eqref{eq Q outlines} and $G$'s are linear forms belonging to the same pencils as $H$'s. Hence we may write $G_\f a=\alpha_1 H_\f a+\alpha_2 H_\f b$ and $G_\f b=\beta_1H_\f a+\beta_2H_\f b$. Substituting this into \eqref{eq P outlines} and subtracting from \eqref{eq Q outlines} we arrive at
\begin{equation}
 \beta_1=\sqrt{\alpha_1^2-1},\quad\beta_2=\sqrt{\alpha_1^2+1}\quad\text{and}\quad \alpha_2=\frac{\alpha_1^4-1}{\alpha_1}.
\end{equation}
since there exists a one-parameter family of quadrics with silhouettes $\av{S}_\f a$ and $\av{S}_\f b$ -- see Fig.~\ref{fig outlines of quadric}, at least three silhouettes are needed. 

\begin{figure}[t]
\begin{center}

  \includegraphics[width=0.34\textwidth]{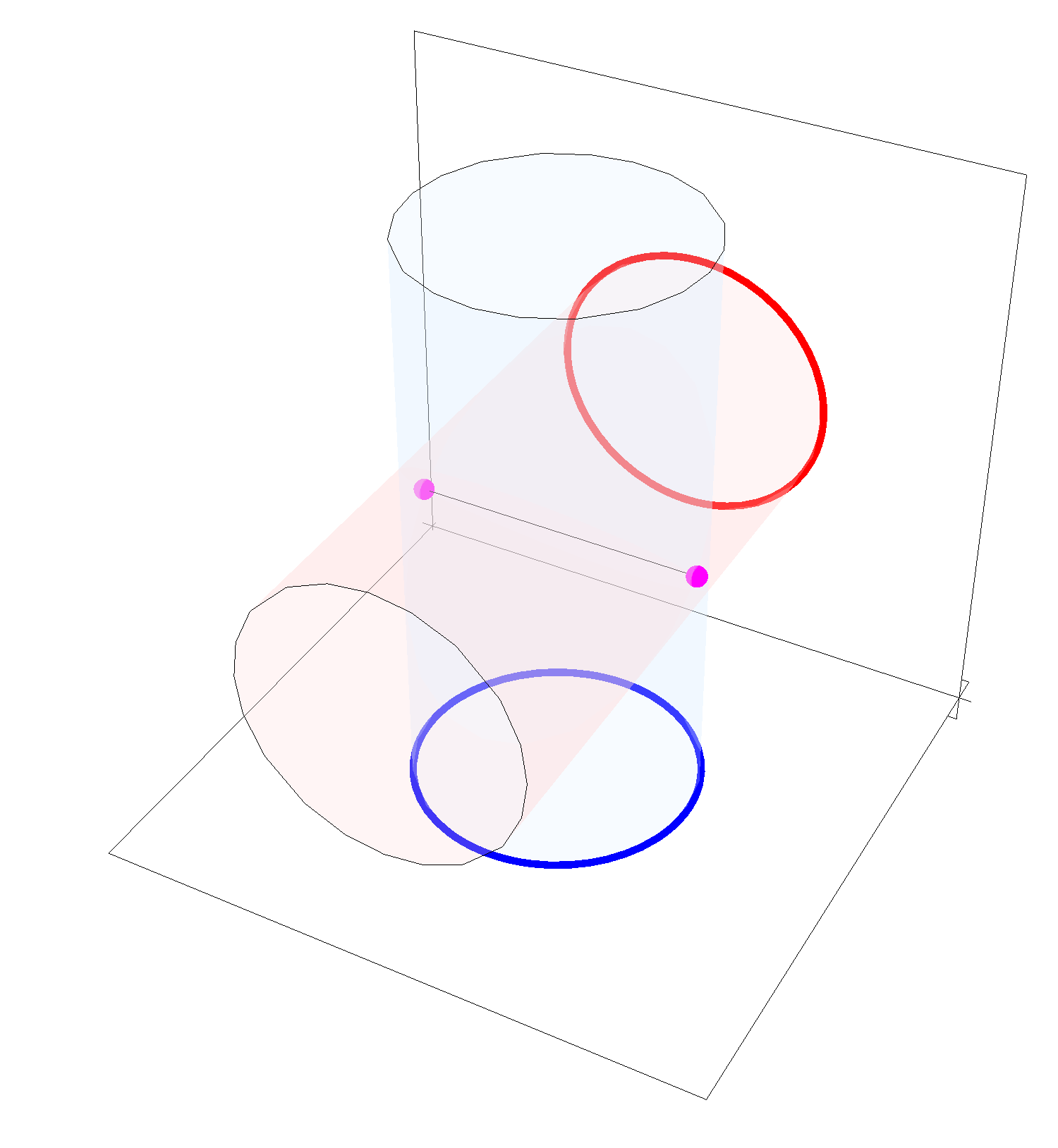}\hspace{3ex}
  \includegraphics[width=0.34\textwidth]{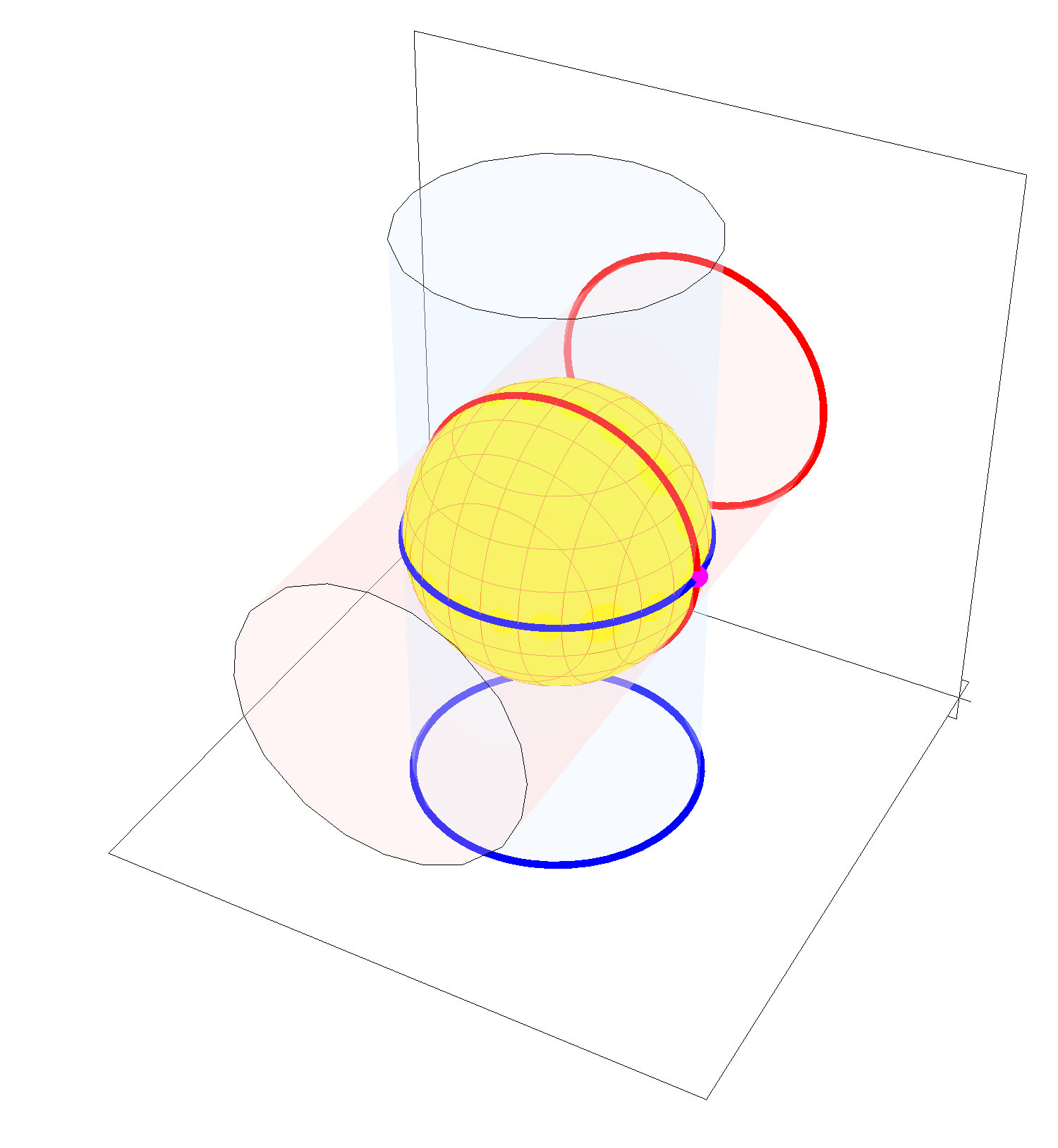}\\
  \includegraphics[width=0.34\textwidth]{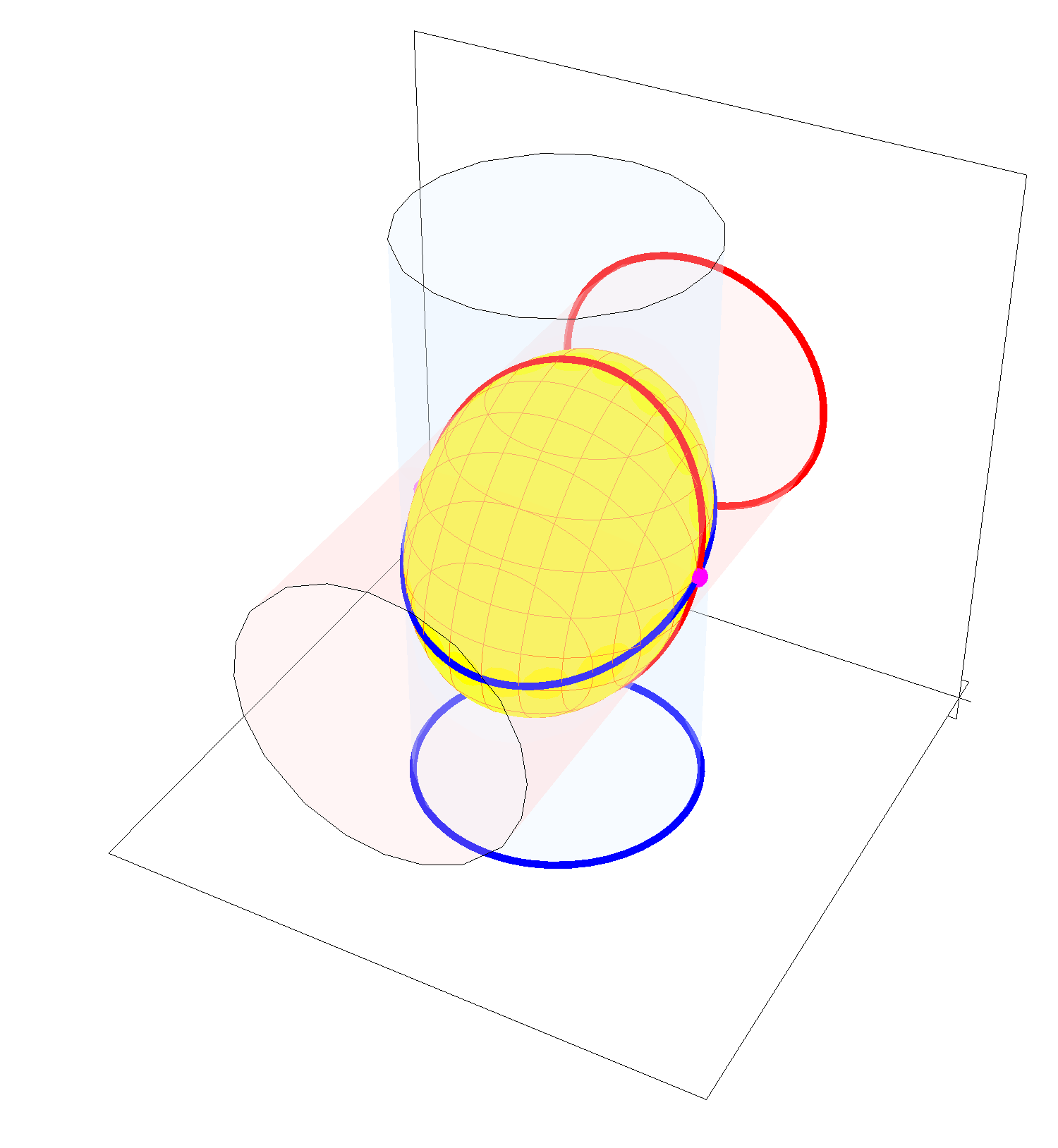}\hspace{3ex}
  \includegraphics[width=0.34\textwidth]{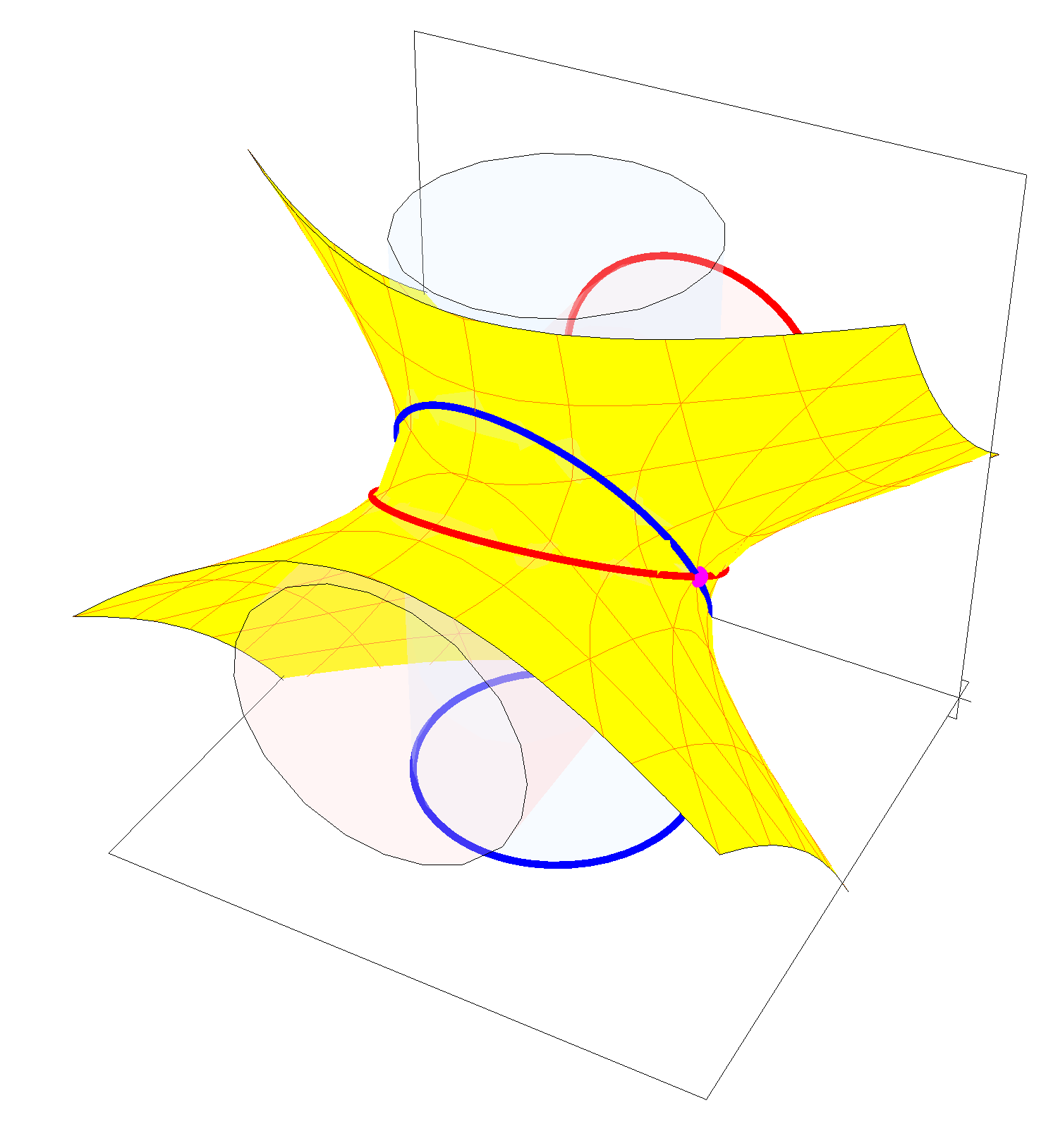}
\begin{minipage}{0.9\textwidth}
\caption{Reconstruction of quadric surface from two contour outlines. \label{fig outlines of quadric}}
\end{minipage}
\end{center}
\end{figure}

\begin{theorem}
  Three generic silhouettes determine a quadric uniquely,i.e., $s(2)=3$.  
\end{theorem}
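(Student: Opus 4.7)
The plan is to reduce the statement to Theorem~\ref{thm quadric two contours} by showing that three generic silhouettes suffice to reconstruct two distinct contour curves of the sought quadric; Theorem~\ref{thm quadric two contours} then forces uniqueness.

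First I apply the construction preceding the theorem to the pair $(\av{S}_\f a,\av{S}_\f b)$. This pins down the two points $\{\f p_{ab},\f q_{ab}\}=\av{C}_\f a\cap\av{C}_\f b$ inside $(\f a\#\av{S}_\f a)\cap(\f b\#\av{S}_\f b)$, namely those intersection points at which the tangent planes to both cones contain the line $\overline{\f a\f b}$. The same recipe applied to the pair $(\av{S}_\f a,\av{S}_\f c)$ yields two further points $\{\f p_{ac},\f q_{ac}\}=\av{C}_\f a\cap\av{C}_\f c$.

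Next, since $\av{C}_\f a$ is a plane conic, the four points $\f p_{ab},\f q_{ab},\f p_{ac},\f q_{ac}$ lie on a common plane on any quadric realizing the data. For a generic configuration of viewpoints no three of these points are collinear, so they span a unique plane $\Pi_\f a$, and the contour is recovered as
\begin{equation}
\av{C}_\f a=\Pi_\f a\cap(\f a\#\av{S}_\f a).
\end{equation}
The identical argument with the role of $\f a$ played by $\f b$ recovers the contour $\av{C}_\f b$. Since any two contours determine the quadric uniquely by Theorem~\ref{thm quadric two contours}, this concludes the proof.

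The main obstacle I expect is a clean formulation of what ``generic'' means here. Coplanarity of the four constructed points is automatic whenever the silhouettes come from an actual quadric, but the non-collinearity of any three of them (together with the irreducibility of the three cones and the non-degeneracy of the analogous construction used to recover $\av{C}_\f b$) must be isolated as an explicit Zariski open condition on the triple $(\f a,\f b,\f c)$. A less elegant but more robust alternative is the dimension count: each pair of silhouettes cuts out a curve in $\pr{9}$ (the one-parameter family derived in the discussion above), and one argues that for generic data two such curves meet in a single reduced point; the geometric route sketched here is preferred because it appeals directly to the previously proved $c(2)=2$.
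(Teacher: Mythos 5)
Your proposal is correct and follows essentially the same route as the paper: identify the intersection points $\av{C}_\f a\cap\av{C}_\f b$ and $\av{C}_\f a\cap\av{C}_\f c$ on the pairwise cone intersections, use them to pin down the plane of $\av{C}_\f a$, recover the contours, and invoke Theorem~\ref{thm quadric two contours}. The only cosmetic difference is that the paper gets non-collinearity for free (three distinct points on a regular conic are never collinear) rather than imposing it as an extra genericity condition.
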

\begin{proof}
 Let be given silhouettes $\av{S}_\f a$, $\av{S}_\f b$ and $\av{S}_\f c$. As above, it is possible to identify $\{\f p,\f q\}=\av{C}_\f a\cap\av{C}_\f b$ on the intersection of the corresponding cones. And similarly for the points $\{\f r,\f s\}=\av{C}_\f a\cap\av{C}_\f c$. Since silhouettes are considered to be generic we may assume that $\f c$ is not contained in the line joining $\f a$ and $\f b$ and it follows that $\{\f p,\f q\}\not=\{\f r,\f s\}$. Hence we know at least three distinct points on $\av{C}_\f a$. Moreover they lie on regular conic section and they span unique plane $\av{H}_\f a$. Finally we arrive at $\av{C}_\f a=\av{H}_\f a\cap(\f a\#\av{S}_\f a)$.
 
The process may be repeated to obtain e.g. $\av{C}_\f b$.  Now the statement follows from Theorem~\ref{thm quadric two contours}.
\end{proof}

\subsection{Contour curves on rational ruled surfaces}

Let $\av{R}$ be a rational ruled surface parameterized as in \eqref{eq R}. From \eqref{eq tangent planes} it is seen that the~tangent planes along a~regular ruling form a pencil. Hence for an arbitrary point not contained in the ruling there exists a unique tangent plane passing through this point. It turns out that the~contour curve is a~section and thus a~rational curve. In particular, the condition on the~point $\f a$ to be contained in the tangent plane of the~surface can be expressed as
\begin{equation}
   \det[\f a,\f p(s),\f q(s),t_0\dot{\f p}(s)+t_1\dot{\f q}(s)]=0.
\end{equation}
Solving this equation for $t_0$, $t_1$ and substituting back to \eqref{eq R} leads to the parameterization of the contour curve $\av{C}_\f a$ in the form
\begin{equation}\label{eq param kontury}
\f c_\f a(s)=\det[\f a,\f p(s),\f q(s), \dot{\f q}(s)]\f p(s)-\det[\f a,\f p(s),\f q(s), \dot{\f p}(s)]\f q(s).
\end{equation}
See \cite{PePoRa99} for the formula using Pl\"{u}cker coordinates. If $\f p(s)$ and $\f q(s)$ were minimal sections of $\av{R}$ of degrees $m$ and $m+n$ respectively, then \eqref{eq param kontury} leads to the following bound of the degree of the contour curves
\begin{equation}
  \deg\av{C}_\f a\leq 4m+2n-2=2\deg\av{R}-2,  
\end{equation}
where the equality holds for a generic contour curve on ruled surface without singular rulings. Quadrics are ruled surfaces with $m=1$ and $n=0$, and thus a generic contour shall be of degree $2$. As mentioned above this is true if the point $\f a$ does not lie on the quadric -- in which case $\av{C}_\f a$ consists of two lines. This behaviour is observed for ruled surfaces of higher degrees as well. If $\f a\in\av{R}$ or it is contained in the torsal tangent plane then $\av{C}_\f a$ is reducible -- it consists of the ruling and a section of the~degree one less.

Although contour lines are sections they do not posses a lowest possible degree and thus two contours always intersect.

\begin{lemma}\label{lem intersection on R}
 Two generic contours on a~rational ruled surface $\av{R}$ intersect at $\deg\av{R}$ regular points and in the cuspidal points of  torsal rulings.
\end{lemma}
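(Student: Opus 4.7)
The plan is to parameterize each contour via \eqref{eq param kontury} and reduce the intersection to a single polynomial equation in the ruling parameter $s$. Since every contour is a section, it meets each ruling in at most one point, so any intersection of $\av{C}_\f a$ and $\av{C}_\f b$ occurs at a value $s_0$ for which both sections pick out the same point $(t_0:t_1)$ on the corresponding ruling. Writing $t_0^\f a = \det[\f a,\f p,\f q,\dot{\f q}]$, $t_1^\f a = -\det[\f a,\f p,\f q,\dot{\f p}]$ (and analogously for $\f b$), this matching amounts to the vanishing of
$$\Phi(s) := \det[\f a,\f p,\f q,\dot{\f p}]\det[\f b,\f p,\f q,\dot{\f q}] - \det[\f a,\f p,\f q,\dot{\f q}]\det[\f b,\f p,\f q,\dot{\f p}].$$
With $\deg\f p=m$ and $\deg\f q=m+n$, a direct count gives $\deg\Phi \leq 6m+3n-2 = 3\deg\av{R}-2$.

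Next I would isolate the contribution of the torsal rulings. Let $T(s) := \det[\f p,\dot{\f p},\f q,\dot{\f q}]$; this has degree $4m+2n-2 = 2\deg\av{R}-2$, and its generically simple zeros are exactly the torsal parameter values. The key step is the divisibility $T\mid\Phi$. At any zero $s_0$ of $T$ the vectors $\f p(s_0),\f q(s_0)$ remain independent (the ruling is not singular), so the rank drop produces $(\gamma,\delta)\neq(0,0)$ with $\gamma\dot{\f p}(s_0)+\delta\dot{\f q}(s_0)\in\mathrm{span}\langle\f p(s_0),\f q(s_0)\rangle$. Multilinearity of the determinant in its last column then yields
$$\gamma\det[\f v,\f p,\f q,\dot{\f p}](s_0) + \delta\det[\f v,\f p,\f q,\dot{\f q}](s_0) = \det[\f v,\f p,\f q,\gamma\dot{\f p}+\delta\dot{\f q}](s_0) = 0$$
for every $\f v$. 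Applied to $\f v=\f a$ and $\f v=\f b$, this shows that the two column vectors forming the $2\times 2$ determinant defining $\Phi(s_0)$ both annihilate $(\gamma,\delta)$, so $\Phi(s_0)=0$. Simplicity of the roots of $T$ upgrades this to $T\mid\Phi$, leaving $\Phi/T$ of degree at most $\deg\av{R}$; this bounds the intersections lying over regular rulings, and for generic $\f a,\f b$ the bound is attained, giving exactly $\deg\av{R}$ transverse intersections.

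It remains to match the zeros of $T$ with the claimed cuspidal points. The same linear dependency rewrites as $\delta t_0^\f a-\gamma t_1^\f a=0$ (and likewise for $\f b$), so $(t_0^\f a:t_1^\f a)=(t_0^\f b:t_1^\f b)=(\gamma:\delta)$; both sections therefore meet the torsal ruling at the same point $\gamma\f p(s_0)+\delta\f q(s_0)$. This is precisely the cuspidal point of the torsal ruling, characterized as the point where the tangent span $\langle\f p,\f q,t_0\dot{\f p}+t_1\dot{\f q}\rangle$ drops dimension.

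The main obstacle is establishing the divisibility $T\mid\Phi$: once the Pl\"ucker-style linear dependency above is recognised, both the count of $\deg\av{R}$ regular intersections and the identification of the cuspidal points of torsal rulings follow from the same rank-drop relation, and the total $\deg\av{R}+(2\deg\av{R}-2)=3\deg\av{R}-2$ matches $\deg\Phi$ exactly.
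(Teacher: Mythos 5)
Your handling of the torsal rulings is essentially the paper's own argument in different clothing: the same rank--drop relation $\gamma\dot{\f p}_0+\delta\dot{\f q}_0\in\mathrm{span}\langle\f p_0,\f q_0\rangle$ is fed into \eqref{eq param kontury} to show that $\f c_\f a(s_0)=\gamma\f p_0+\delta\f q_0$ is independent of $\f a$, and that this common point is the cuspidal point of the torsal ruling (the paper identifies it by showing every section through it is tangent to the ruling; your characterization via the degeneration of the tangent span $\langle\f p,\f q,t_0\dot{\f p}+t_1\dot{\f q}\rangle$ is equivalent). Where you genuinely diverge is the count of regular intersections. The paper applies the Gauss map: the two contours become the plane sections of $\av{R}^\vee$ by the hyperplanes $\sum a_iy_i=0$ and $\sum b_iy_i=0$, so their regular common points correspond to the intersection of $\av{R}^\vee$ with a generic line, and B\'ezout gives exactly $\deg\av{R}^\vee=\deg\av{R}$ points. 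You instead compress everything into the single polynomial $\Phi$ and count degrees.

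The weak point of your route is the sentence ``for generic $\f a,\f b$ the bound is attained.'' What you actually establish is the upper bound $\deg(\Phi/T)\le\deg\av{R}$, and even that rests on the degree bounds for $\Phi$ and $T$ being simultaneously sharp, which they need not be: already for a quadric ($m=1$, $n=0$) the leading coefficients of $\det[\f a,\f p,\f q,\dot{\f p}]$ and $\det[\f a,\f p,\f q,\dot{\f q}]$ vanish identically, so $\deg\Phi\le 2$ rather than $4$, while $T$ is a nonzero constant rather than a polynomial of degree $2$. The two deficiencies cancel there, but your argument offers no reason why they must cancel in general, nor why every root of $\Phi/T$ produces a genuine regular intersection (rather than a parameter where $(t_0^\f a,t_1^\f a)=(0,0)$ and \eqref{eq param kontury} has a base point), nor why distinct roots give distinct points. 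This exactness is the quantitative content of the lemma, and it is what the dual--surface argument buys for free, since a generic line meets $\av{R}^\vee$ transversally in precisely $\deg\av{R}$ smooth points that pull back bijectively under the birational Gauss map. To complete your version you would need to prove that $\Phi/T$ has exactly $\deg\av{R}$ simple roots for generic $\f a,\f b$, or simply import the paper's duality step for that half of the statement.
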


\begin{proof}
  Let be given two generic contours $\av{C}_\f a$ and $\av{C}_\f b$. Recall that the Gauss image  $\gamma(\av{C}_\f a)$ of the contour is the section of dual surface $\av{R}^\vee$ by plane $\sum_{i=0}^3a_iy_i=0$ and similarly for the second contour $\av{C}_\f b$.  Hence $\gamma(\av{C}_\f a)\cap\gamma(\av{C}_\f b)$ is just the~intersection of $\av{R}^\vee$ with a~generic line. Since these are exactly the images of non singular intersections of $\av{C}_\f a$ and $\av{C}_\f b$ we conclude that two contours intersect in $\deg\av{R}^\vee=\deg\av{R}$ regular points.
  
The image of the~mapping $\gamma$ is not defined on singular locus of $\av{R}$, and thus the singular intersection cannot be seen from the dual surface.

  Let $\f p(s)$ and $\f q(s)$ be parametrizations of two minimal section and let the~torsal ruling correspond to the parameter value $s=s_0$. Write $\f p(s_0)=\f p_0$, $ \dot{\f p}(s_0)= \dot{\f p}_0$ and similarly for the $\f q(s)$. Since the ruling is torsal we have $\rank[\f p_0,\dot{\f p}_0,\f q_0,\dot{\f q}_0]=3$ and thus there exist constants $\lambda_1,\dots,\lambda_4$ such that $\lambda_1\f p_0+\lambda_2\dot{\f p}_0+\lambda_3\f q_0+\lambda_4\dot{\f q}_0$. After substituting to \eqref{eq param kontury} arrive at
\begin{equation}
\begin{array}{rcl}
  \f c_\f a(s_0) &=& \det[\f a,\f p_0,\f q_0, \dot{\f q}_0]\f p_0-\det[\f a,\f p_0,\f q_0,\frac{-1}{\lambda_2}
  (\lambda_1\f p_0+\lambda_3\f q_0+\lambda_4\dot{\f q}_0 )]\f q_0 \vspace{5pt} \\ 
  &=&\det[\f a,\f p_0,\f q_0, \dot{\f q}_0](\f p_0+\frac{\lambda_4}{\lambda_2}\f q_0).
\end{array}
\end{equation}
As $\f c_a(s_0)$ is a~point in the projective space we may write it as $\lambda_2\f p_0+\lambda_4\f q_0$. 

Consider an arbitrary section passing through this point, i.e. admitting a parameterization $\f r(s)=\varphi(s)\f p(s)+\psi(s)\f g(s)$ where $\varphi(s_0)=\lambda_2$ and $\psi(s_0)=\lambda_4$. Then the tangent line to this section at $s=s_0$ spanned by $\f r(s_0)$ and 
\begin{equation}
  \dot{\f r}(s_0)=\dot{\varphi}(s_0)\f p_0+\lambda_2\dot{\f p}_0+\dot{\psi}(s_0)\f q_0+\lambda_4\dot{\f q}_0=
  (\dot{\varphi}(s_0)-\lambda_1)\f p_0+(\dot{\psi}(s_0)-\lambda_3)\f q_0
\end{equation}
coincides with the ruling. Hence all the sections pass through $\f c_\f a(s_0)$ with the same direction and it must be the~unique cuspidal point of torsal ruling.
\end{proof}

Given a~proper parameterization $\f c(s)$ of a~rational curve $\av{C}$, the tangent planes to the ruled surface $t_0\f c(s)+t_1\f q(s)$ along $\av{C}$ contain the point $\f a$ if and only if $\f q(s)$ is contained in the plane spanned by a tangent line to $\av{C}$ at $\f c(s)$ and the~point $\f a$. It means that 
\begin{equation}
  \f q(s)= \phi(s)\f c(s)+\xi(s)\dot{\f c}(s)+\psi(s)\f a,
\end{equation}
where $\phi,\ \xi$ and $\psi$ are arbitrary polynomials. It turns out that there exists a plenty of ruled surfaces with prescribed contour. Surprisingly we still have:

\begin{theorem}\label{thm ruled determined by 2 contours}
  Two contour curves determine rational ruled surface uniquely, i.e., $c(k)=2$.
\end{theorem}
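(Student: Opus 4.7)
The plan is to reduce the reconstruction to pinning down a single linear fractional transformation. Suppose $\av{R}$ is any rational ruled surface carrying $\av{C}_\f a$ and $\av{C}_\f b$ as its contours with respect to the two viewpoints. Since contours on a rational ruled surface are rational sections (as discussed at the start of this subsection), Proposition~\ref{prp coherent parameterizations} allows us to write
\begin{equation*}
  \f r(s,t_0,t_1) \;=\; t_0\,\f c_\f a(s) \;+\; t_1\,\f c_\f b(\phi(s)),
\end{equation*}
where $\f c_\f a$ and $\f c_\f b$ are fixed proper parametrizations of the two contours and $\phi$ is some linear fractional transformation. Because the curves $\av{C}_\f a, \av{C}_\f b$ themselves are given, everything about $\av{R}$ except $\phi$ is already determined, so uniqueness of $\av{R}$ reduces to uniqueness of $\phi$.

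The next step is to read off enough values of $\phi$ from the mutual intersection points of the two contours. By Lemma~\ref{lem intersection on R}, on any such $\av{R}$ the curves $\av{C}_\f a$ and $\av{C}_\f b$ meet in $d := \deg \av{R}$ regular points. At each such point $\f p_i = \f c_\f a(s_i) = \f c_\f b(s'_i)$, the ruling of $\av{R}$ through $\f p_i$ meets $\av{C}_\f b$ only at $\f p_i$ (as $\av{C}_\f b$ is a section), which forces $\phi(s_i)=s'_i$. A linear fractional transformation is determined by its values at three distinct points, so as soon as $d \geq 3$ the map $\phi$ -- and therefore the surface $\av{R}$ -- is uniquely reconstructible from the data $(\av{C}_\f a, \f a, \av{C}_\f b, \f b)$.

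The remaining case $d = \deg \av{R} = 2$ is exactly that of quadrics, which has already been settled in Theorem~\ref{thm quadric two contours}. Combining the two statements then gives $c(k)=2$ for every $k \geq 2$.

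The step I expect to be most delicate is checking that the $d$ interpolation data $(s_i,s'_i)$ involve genuinely distinct parameter values $s_i$ (and distinct $s'_i$), so that three of them actually determine $\phi$. For generic choices of $\f a, \f b$ this follows from the regularity claim in Lemma~\ref{lem intersection on R}; in degenerate configurations (e.g.\ when $\f a \in \av{R}$ and the contour becomes reducible, or when several intersection points share an $s$-value) one would need to replace the pointwise conditions by the algebraic identity $\det[\f a,\f c_\f a(s),\dot{\f c}_\f a(s),\f c_\f b(\phi(s))]\equiv 0$ encoding the contour property, and argue by specialization from the generic case.
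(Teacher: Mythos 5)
Your proof is correct and follows essentially the same route as the paper: reduce uniqueness of the surface to uniqueness of the linear fractional reparameterization $\phi$ via Proposition~\ref{prp coherent parameterizations}, then pin down $\phi$ by its forced values at the $\deg\av{R}$ intersection points supplied by Lemma~\ref{lem intersection on R}, with the quadric case deferred to Theorem~\ref{thm quadric two contours}. The only difference is that the paper additionally gives a constructive recipe for finding $\phi$ (as a component of the gcd of two determinantal conditions), which is not needed for the uniqueness claim itself.
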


\begin{proof}
  Let $\f c_\f a(u)$ and $\f c_\f b(v)$ be proper parametrizations of two contour lines $\av{C}_\f a$ and $\av{C}_\f b$ of the unknown surface $\av{R}$. By Proposition~\ref{prp coherent parameterizations}, there exist reparametrizations such that $\f c_\f a(\phi(s))$ and $\f c_\f b(\psi(s))$ correspond in a parameter, i.e., $\f c_\f a(\phi(s))$ and $\f c_\f b(\psi(s))$ are points on the same ruling for each $s\in\C$. To find this reparametrization, realise that a tangent plane at a~point of $\av{C}_\f a$ is spanned by the~tangent line to the contour and point $\f a$. Moreover the tangent plane contains the ruling and thus the corresponding point on $\av{C}_\f b$ as well. Therefore it is contained in the intersection of $\av{C}_\f b$ with the tangent plane, which is expressed as
\begin{equation} \label{eq first coherent}
  \det[\f c_\f a(u),\dot{\f c}_\f a(u), \f a,\f c_\f b(v)]=0.
\end{equation}
This equation defines a curve in the space of parameters $u,\ v$. Thus a rational parameterization $u=\phi(s)$, $v=\psi(s)$ of any of its components leads to a parameterization of the~ruled surface $t_0\f c_\f a(\phi(s))+t_1\f c_\f b(\psi(s))$, such that $\av{C}_\f a$ is a contour w.r.t. $\f a$. The second curve is a contour w.r.t. $\f b$ if and only if $(\phi(s),\psi(s))$ is a parameterization of some component of the~curve
\begin{equation} \label{eq second coherent}
  \det[\f c_\f b(u),\dot{\f c}_\f b(u), \f b,\f c_\f a(v)]=0,
\end{equation}
by the same arguments. So, let $\Delta$ denotes the greatest common divisor of the left hand hand sides of \eqref{eq first coherent} and \eqref{eq second coherent} and let $\Delta=\Delta_1\cdots\Delta_k$ be a factorization to reducible polynomials. Then any $\Delta_i(u,v)$ defining a~rational curve in the space of parameters provides a parameterization of the~sought ruled surface. 

It remains to show that the solution is unique. By Proposition~\ref{prp coherent parameterizations} the parameterization of $\Delta_i$ can by written w.l.o.g. as $u=s$ $v=(\alpha s+\beta)/(\gamma s+\delta)$. Let $\av{C}_\f a$ and $\av{C}_\f b$ intersect in $k$ points $\{\vek[p]_i\}_{i=1}^k$ and let $\f p_i=\f c_\f a(s_i)$. As the points from the intersection lie on the same ruling, they must correspond in parameter, i.e, we have $k$ equations
\begin{equation}
  s_i=\psi(s_i)=\frac{\alpha s_i+\beta}{\gamma s_i+\delta},\qquad\text{for},\qquad i=1,\dots,k.
\end{equation}
Any linear rational function is determined by values at three points. By Lemma~\ref{lem intersection on R} $k\geq\deg\av{R}$  and thus $\psi(s)$ is unique whenever $\deg\av{R}\geq 3$.
\end{proof}

\begin{remark}
  Because the~rational component of the~curve $\Delta=0$ in the proof of Theorem~\ref{thm ruled     determined by 2 contours}, is given by a pair of linear rational functions we immediately conclude that it can be written as
\begin{equation}
 \alpha_{11} u v -\alpha_{10} u+\alpha_{01} v-\alpha_{00} = 0,
\end{equation}
for some constants $\alpha_{ij}$.
\end{remark}

Hence the number of needed contours  does not depend on the degree and we have $c(k)=2$ for all $k$. On the other hand the~reconstruction from silhouettes differs from the quadric case in an~unexpected way.

\begin{theorem}
  Ruled surface of degree at least $3$  is uniquely determined  by two its generic silhouettes.
\end{theorem}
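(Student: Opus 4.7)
The strategy is to reduce to Theorem~\ref{thm ruled determined by 2 contours} by showing that the two silhouettes, together with their viewpoints, determine both contours $\av{C}_\f a$ and $\av{C}_\f b$ uniquely. Each silhouette $\av{S}_\f a$ together with $\f a$ determines the cone $K_\f a = \f a\#\av{S}_\f a$ on which the contour must lie, and once proper parametrizations $\f s_\f a(u)$ and $\f s_\f b(v)$ are fixed, each contour admits a description
\[
  \f c_\f a(u) = \sigma_\f a(u)\f a + \f s_\f a(u), \qquad \f c_\f b(v) = \sigma_\f b(v)\f b + \f s_\f b(v),
\]
with unknown rational height functions $\sigma_\f a,\sigma_\f b$ encoding the position of the contour along the rulings of the cones.

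The key observation is that along $\av{C}_\f a$ the tangent planes of $\av{R}$ and $K_\f a$ agree: both contain the point, the direction to $\f a$, and the tangent to $\av{C}_\f a$. Hence at $\f c_\f a(u)$ the tangent plane of $\av{R}$ equals the (height-independent) tangent plane $T_\f a(u)$ of $K_\f a$, spanned by $\f a$, $\f s_\f a(u)$, $\dot{\f s}_\f a(u)$; analogously one obtains $T_\f b(v)$. Since the ruling through $\f c_\f a(u)$ is contained in $T_\f a(u)$ and simultaneously passes through the corresponding point $\f c_\f b(v(u))$, it must coincide with the line $T_\f a(u)\cap T_\f b(v(u))$, where $v(u)$ is the coherent reparametrization from Proposition~\ref{prp coherent parameterizations} and is therefore a linear fractional transformation carrying three free parameters. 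Once $v(u)$ is known, each contour point is recovered as the intersection of this ruling with the ray from the viewpoint through the corresponding silhouette point, and Theorem~\ref{thm ruled determined by 2 contours} reconstructs the surface.

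It remains to pin down $v(u)$. I would use the $\deg\av{R}$ intersection points of $\av{C}_\f a$ and $\av{C}_\f b$ provided by Lemma~\ref{lem intersection on R}: at such a point $\f p$ the common tangent plane of $\av{R}$ contains both $\f a$ and $\f b$, which translates via projection to the conditions that the tangent line to $\av{S}_\f a$ at $\pi_\f a(\f p)$ passes through $\pi_\f a(\f b)$ and symmetrically for $\f b$. Combined with $\f p\in K_\f a\cap K_\f b$, these pick out the contour-intersection points and produce pairs $(u_i,v_i)$ with $v(u_i)=v_i$. Since $\deg\av{R}\geq 3$, three such pairs determine the linear fractional $v(u)$ uniquely. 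Note the contrast with the quadric case, where only $\deg\av{R}=2$ such pairs are available and a one-parameter ambiguity in $v(u)$ remains, consistent with the earlier observation that two silhouettes do not suffice for quadrics.

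The main obstacle is to verify that the tangency conditions above cleanly isolate the $\deg\av{R}$ contour-intersection pairs without introducing spurious extra solutions coming from other singular points of the intersection curve $K_\f a\cap K_\f b$ (e.g. from nodes or cusps of the silhouettes, or from the cuspidal points of torsal rulings), and to confirm that the three selected pairs are in sufficiently general position to determine the linear fractional $v(u)$ uniquely; this is precisely where the genericity hypothesis on $\f a$ and $\f b$ is essential.
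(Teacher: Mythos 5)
Your proposal is correct and follows essentially the same route as the paper: lift each silhouette to its cone, observe that the tangent plane of $\av{R}$ along $\av{C}_\f a$ is the height-independent plane spanned by $\f a$, $\f s_\f a(u)$, $\dot{\f s}_\f a(u)$ (your ruling $T_\f a(u)\cap T_\f b(v(u))$ is exactly the paper's determinantal lift formula in disguise), and pin down the linear fractional reparametrization using the $\deg\av{R}\geq 3$ regular intersection points of the contours, detected as the points where the tangent plane contains both $\f a$ and $\f b$ and matched by requiring the lines $\f a\f p_i$ and $\f b\f q_j$ to meet. The caveat you flag about discarding spurious solutions (singular points of the silhouettes) is handled in the paper in the same way, by genericity.
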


\begin{proof}
  Let $\f s_\f a(s)$ be a proper parameterization of silhouette $\av{S}_\f a$. This is a projection of the~contour $\av{C}_\f a$ from a point $\f a$ to a fixed plane and thus the contour can be lifted
  \begin{equation}\label{eq lift contour}
    \f c_\f a(s)=\phi(s)\f s_\f a(s)+\psi(s)\f a,
  \end{equation}
  where $\phi$ and $\psi$ are  polynomials. Parameterizations of two silhouettes $\f s_\f a(s)$, $\f s_\f b(s)$ correspond in parameter if they are projections of parameterizations of contours that correspond in parameter. First, we observe that two such  parameterizations enable to reconstruct the contours as 
\begin{equation}\label{eq contour from 2 silhouettes}
    \f c_\f a(s)=\det[\f b,\f s_\f a(s),\f s_\f a'(s),\f a]\f s_\f a(s)+\det[\f b,\f s_\f a(s),\f s_\f a'(s),\f s_\f b(s)]\f a.
\end{equation}
and analogously for $\f c_\f b(s)$.
To prove this consider the tangent plane to the sought ruled surface at point $\f c_\f b(s)$. We know that it is  spanned by tangent line to $\av{C}_\f b$ at this point and the point $\f b$. Because of the~linearity of the projection, the tangent line to $\av{S}_\f b$ at $\f s_\f b(s)$ together with the point $\f a$ span the same plane. Hence it possesses an~equation
\begin{equation}\label{eq tangent from silh}
  \det[\f x,\f b,\f s_\f b(s), \dot{\f s}_b(s)]=0.
\end{equation}
If a parameterization of $\av{C}_\f a$ corresponds in parameter with $\f c_\f b(s)$, then $\f c_\f a(s)$ must lie in tangent plane \eqref{eq tangent from silh} for all $s$. Substituting \eqref{eq lift contour} into \eqref{eq tangent from silh} we arrive at conditions on polynomials $\phi$ and $\psi$
\begin{equation}\label{eq phi psi}
  \phi(s)\det[\f s_\f a(s),\f b,\f s_\f b(s), \dot{\f s}_b(s)]+\psi(s)\det[\f a,\f b,\f s_\f b(s), \dot{\f s}_b(s)]\equiv 0.
\end{equation}
One obvious solution of the~above equation is exactly \eqref{eq contour from 2 silhouettes}. Note that pairs $(\phi,\psi)$ fulfilling \eqref{eq phi psi} form an~ideal in $\C[s]^2$. Since $\C[s]^2$ is a~principal ideal domain, each  solutions would  be the same up to multiplication by some polynomial. The effect on parametrization \eqref{eq lift contour} consists in adding some base points. However the curve parametrized by \eqref{eq contour from 2 silhouettes} is unique.

Let be given two proper parameterizations $\f s_\f a(s)$ and $\f s_\f b(u)$ of the silhouettes.
The reconstruction of the surface is based on the existence of reparameterization $\phi(s)$ such that $\f s_\f a(s)$ and $\f s_\f b(\phi(s))$ correspond in parameter. And thus the uniqueness of the sought surface follows from the uniqueness of the reparameterization $\phi$. First observe that $\phi$ is a~linear fractional transformation by Proposition~\ref{prp coherent parameterizations}.  Such a $\phi$ is then uniquely determined by values at three points and it is enough to find three corresponding pairs $(\f p_i,\f q_i)\in\av{S}_\f a\times\av{S}_\f b$.

Let $\f r\in\av{C}_\f a\cap\av{C}_\f b$ be a regular point in the intersection of two contours. The point is regular on both contours and as well are its projections on the silhouettes. Moreover the tangent plane $T_\f r\av{R}$ is spanned by $\f r$, $\f a$ and $\f b$.  Conversely any tangent plane passing through $\f a$ and $\f b$ contains  such a point $\f r$. Recall that tangent planes along $\av{C}_\f b$ are parameterized by \eqref{eq tangent from silh} and the analogous equation can be written for tangents along $\av{C}_\f a$. Solution of
\begin{equation}\label{eq marks on sil}
  \det[\f a,\f b,\f s_\f b(s), \dot{\f s}_b(s)]=0.
\end{equation} 
consists of singular points on $\av{S}_\f b$ and regular point where the tangent contains $\f a$. As the regular points correspond to the regular intersections of contours, there exists $\ell = \deg\av{R}$ such points by Lemma~\ref{lem intersection on R}.  

Assume $P=\{\f p_i\}_{i=1}^\ell\subset\av{S}_\f a$ and $Q=\{\f q_j\}_{j=1}^\ell\subset\av{S}_\f b$ be sets of these points. 
Because the~silhouettes $\av{S}_\f a$ and $\av{S}_\f b$ are considered to be generic we may assume that point $\f b$ is not contained in the~plane spanned by $\f a$, $\f p_i$ and $\f q_j$ for each pair $i,j$ of indices. And similarly for the~point $\f a$. This ensures that for each $i$ there exists a unique $j$ such that lines $\f a\f p_i$ and $\f b\f q_j$ do intersect. This provides a corresponding pair of points $(\f p_i,\f q_j)$. As we have $\ell=\deg\av{R}$ such pairs the sought reparameterization is unique whenever $\deg\av{R}\geq 3$ which completes the proof.

\end{proof}

\begin{example}
 Let $\f a=(1:2:1:2)$ and the first silhouette be parameterized
 \begin{equation}
   \f s_\f a(s)=(-8+ 16s- 10  s^2 + 2 s1^3: 0: 
 8  s^2 - 6 s^3 + s^4: -4 + 24  s - 
  18s^2 + 4 s^3).
 \end{equation}
 The second silhouette w.r.t. $\f b=(1:2:0:1)$ admits parameterization
 \begin{equation}
  (-2: -4 - u + 2 u^2 - u^3: 1 + u - u^2 - u^3: 0).
 \end{equation}
 Substituting into \eqref{eq marks on sil} we obtain the candidates for the corresponding points. After removing singularities of silhouettes we arrive at
 \begin{equation}
   P=\{(1: 0: 2 +\I:3-\I),\ (1: 0: 2-\I: 3 +\I),\ (0: 0: 0:1)\}
 \end{equation}
  and
  \begin{equation}
   Q=\{(1: 3: -1+\I: 0),\ (1:3:-1-\I: 0),\ (1: 2: 0: 0)\}
  \end{equation}
  The efficient way to check the mutual positions of lines $\f a\f p_i$ and $\f b\f q_j$ for $i,j=1,\dots,3$ is to introduce Pl\"{u}cker coordinates, identifying lines in $\pr{3}$ with a point on the~quadric $\mathrm{Gr}(1,3)\subset\pr{5}$ -- see e.g. \cite[Chapter 2]{PoWa01} for the introduction to the topic. There is a bilinear form product $\langle\cdot,\cdot\rangle:\mathrm{Gr}(1,3)\times\mathrm{Gr}(1,3)\rightarrow\C$ such that $\langle\f X,\f Y\rangle=0$ if and only if the lines with Pl\"{u}cker coordinates $\f X$ and $\f Y$ intersect. Denote $\f P_i$ the coordinates of lines joining $\f a$ and $\f p_i$ and similarly $\f Q_j$ for lines $\f b\f q_j$. Then we have a matrix
  \begin{equation}
    \left( \langle\f P_i,\f Q_j\rangle\right)_{i,j=1}^3=
    \left(
      \begin{array}{ccc}
         -\I& 0 &1\\
         0& \I &1\\
         -1&-1&0
      \end{array}
    \right)
  \end{equation}
From this it is immediately seen that the corresponding pairs of points are $\{(\f p_1,\f q_2),(\f p_2,\f q_1),(\f p_3,\f q_3)\}$. Since we have $(\f p_1,\f q_2)=(\f s_a(1+\I),\f s_b(\I))$, $(\f p_2,\f q_1)=(\f s_a(1-\I),\f s_b(-\I))$ and $(\f p_3,\f q_3)=(\f s_a(2),\f s_b(1))$ the sought reparameterization is $u(s)=s-1$. The corresponding parameterizations of silhouettes can by lifted to contours by \eqref{eq contour from 2 silhouettes} which yields
  \begin{equation}
    \f c_a(s)=\left(-4 (2 - 3 s + s^2): -4 s (2 - 3 s + s^2): 
 s (-4 + 14 s - 8 s^2 + s^3): -4 + 16 s - 6 s^2\right)
  \end{equation}
  and
  \begin{equation}
    \f c_b(s)=\left(-2 + 3 s - s^2: -s (2 - 3 s + s^2): -(-2 + s) s^2: -(-3 + s) s\right)
  \end{equation}
  These parameterization of contour curves already correspond in the parameter and thus can be used to parameterize the~surface $\av{R}$ as $t_0\f c_\f a(s)+t_1\f c_\f b(s)$. Note however that this parameterization is not the optimal in the sense of the lowest possible degree.  In particular, the minimal sections on this cubic ruled surface  provide the parameterization $ (2 t_0 + t_1: 2 s t_0 + s t_1: s^2 t_0 + s t_1: t_1)$.
\end{example}

\section{Isophotes}
Recall that the isophote was defined as a loci of points whose normal direction encloses angles $\phi$ or $-\phi$ with the fixed direction. We used this modified definition to ensure that it is an algebraic curve. However it can happen in some cases, that
$\av{I}_{\f a,\alpha}$ decomposes into two components corresponding to the~choice of the sign of the angle. If this happen, both curves are algebraic and the usual definition can be used. However we will relate this behaviour with the reducibility of the offset of a surface and conclude that it is vary rare.

Let $\delta$ be a constant, \emph{$\delta$--offset} $\av{O}_\delta\av{X}_a$ of surface $\av{X}_a\subset\af{3}=\pr{3}\backslash\omega$ is defined as the closure of the set
\begin{equation}
  \{ P\pm\delta\frac{\nu(P)}{\Vert\nu(P)\Vert}\mid P\in(\av{X}_a)_{sm}\},
\end{equation} 
where $\nu(P)$ is a normal vector at point $P$ and $\Vert\nu\Vert=\sqrt{\nu_1^2+\nu_2^2+\nu_3^2}$.  By the offset of a~projective surface $\av{X}$ we mean the projective closure of $\av{O}_\delta\av{X}_a$. It is known that the offset of a~surface may consist of at most two components, however for a~generic surface it is an~irreducible variety. See e.g. \cite{ArSeSe97,Po95,VrLa14a} for more details about offsets of surfaces.

\begin{lemma}\label{lem offset then isophote}
  Let a surface $\av{X}$ has the~reducible offsets. Then the isophotes on the surface fall into two components corresponding to angles $\pm\phi$.
\end{lemma}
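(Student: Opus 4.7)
The plan is to trace the algebraic reason the offset can split, and to observe that exactly the same condition makes the defining polynomial of the isophote factor, one factor per sign of the angle. The classical fact behind reducibility of offsets (cf.\ \cite{ArSeSe97,Po95,VrLa14a}) is that $\av{O}_\delta\av{X}$ is reducible precisely when $\nu\cdot\nu$ is a square in the function field $\C(\av{X})$: the offset parameterization $\f p\mapsto \f p\pm\delta\,\nu(\f p)/\Vert\nu(\f p)\Vert$ factors through the double cover $\tilde{\av{X}}\to\av{X}$ given by $y^2=\nu\cdot\nu$, and the offset splits if and only if this cover is trivial, i.e.\ if and only if there exists a rational function $n\in\C(\av{X})$ with $n^2=\nu\cdot\nu$. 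My first step is to state and justify this equivalence; essentially everything else is formal.

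Granted such an $n$, the polynomial defining $\av{I}_{\f a,\alpha}$ in \eqref{eq isophote} becomes a difference of squares once $\nu\cdot\nu$ is replaced by $n^2$, and therefore factors on $\av{X}$:
\begin{equation*}
(\nu\cdot\f a)^2-\alpha^2(\f a\cdot\f a)\,n^2=\bigl(\nu\cdot\f a-\alpha\sqrt{\f a\cdot\f a}\,n\bigr)\bigl(\nu\cdot\f a+\alpha\sqrt{\f a\cdot\f a}\,n\bigr).
\end{equation*}
Both factors are rational functions on $\av{X}$; after clearing denominators they define two subvarieties of $\av{X}$ whose union is $\av{I}_{\f a,\alpha}$. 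With respect to the now globally defined orientation $\nu/n$ of the surface normal, the equations $\nu\cdot\f a=\pm\alpha\sqrt{\f a\cdot\f a}\,n$ read $\cos\phi=\pm\alpha$, which cut out exactly the loci where the oriented normal makes angle $+\phi$, respectively $-\phi$, with the direction $\f a$. This identifies the two components with the two signs of the angle, as required.

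The step I expect to be the only real obstacle is to check that the two factors actually define distinct curves rather than coincide or trivialize. For a direction $\f a$ outside the absolute conic $\f a\cdot\f a=0$ and $\alpha\ne 0$, each factor is a non-constant rational function on $\av{X}$; their sum $2(\nu\cdot\f a)$ and difference $2\alpha\sqrt{\f a\cdot\f a}\,n$ are both nonzero on $\av{X}$, so the two factors are not scalar multiples of one another, and the splitting is honest. Carrying out this genericity check, together with the rigorous justification of the characterization ``offset reducible $\Leftrightarrow$ $\nu\cdot\nu$ is a square on $\av{X}$'', completes the proof.
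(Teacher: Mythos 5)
Your proposal is correct and follows essentially the same route as the paper: the key step in both is the characterization ``offset reducible $\Leftrightarrow$ $\nu\cdot\nu$ is a perfect square modulo the defining equation'' (which the paper imports from a cited result of Vr\v{s}ek--L\'{a}vi\v{c}ka rather than re-deriving via the double cover), followed by the same difference-of-squares factorization of the isophote equation. Your added check that the two factors are genuinely distinct is a reasonable extra precaution that the paper omits, but it does not change the argument.
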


\begin{proof}
  Clearly it is  sufficient to prove the statement for the affine surface $\av{X}_a$. Let it be given by an~equation $f(x_1,x_2,x_3)=0$.  Then we may write $\nu(\f P)=\nabla\,f(\f P)$. It was proved in \cite{VrLa13} that the offset is reducible if and only if $\nabla\,f\cdot\nabla\,f$ is a perfect square in $\C[\av{X}_a]$, i.e. we may write $\nabla\,f\cdot\nabla\,f\equiv \sigma^2 \mod f$ for some polynomial $\sigma$. (The mentioned result is proved for the case of plane curves only, but it can be directly generalized for any hypersurface). Hence the isophote is the closure of the intersection $\av{X}_{sm}$ with the~reducible surface 
\begin{equation}
  \left(\nabla f(\f x)\cdot\f a-\alpha\sqrt{\f a\cdot\f a}\,\sigma(\f x) \right)\left(\nabla f(\f x)\cdot\f a+\alpha\sqrt{\f a\cdot\f a}\,\sigma(\f x)\right)=0,
\end{equation}
whose two components correspond to the sought components of the~isophote.
\end{proof}

Although there is a strong belief that the converse of Lemma~\ref{lem offset then isophote} is true as well we will prove it for the case of rational surfaces only.

\begin{lemma}\label{lem isophote then offset}
Let $\av{X}$ be a rational surface whose isophotes generically splits into components corresponding to the choice of the sign of angle. Then the offsets of the surface are reducible.
\end{lemma}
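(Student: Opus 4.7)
The plan is to show that the sign-splitting hypothesis forces $\nu\cdot\nu$ to be a perfect square in $\C(\av{X})$; the criterion from \cite{VrLa13} used in the proof of Lemma \ref{lem offset then isophote} then immediately delivers reducibility of the offset.

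Choose a rational parameterisation $\phi\colon\mathbb{A}^{2}\dashrightarrow\av{X}$ and pull back the normal map to obtain coprime polynomials $\tilde\nu_{1},\tilde\nu_{2},\tilde\nu_{3}\in\C[s,t]$. Writing $\tilde L=\f a\cdot\tilde\nu$ and $\tilde N=\tilde\nu\cdot\tilde\nu$, the pulled-back isophote equation is
$$F_{\f a,\alpha}\;=\;\tilde L^{2}-\alpha^{2}(\f a\cdot\f a)\,\tilde N\;\in\;\C[s,t].$$
By hypothesis this polynomial is reducible for generic $(\f a,\alpha)$, and its two factors separate the points of angle $+\phi$ from those of angle $-\phi$; in particular $(\f a\cdot\nu)/\sqrt{(\f a\cdot\f a)(\nu\cdot\nu)}$ has a constant sign on each component $C$, so a single-valued branch of $\sqrt{\tilde N}$ is picked out on $C$. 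Equivalently, $\tilde N|_{C}$ is a perfect square in $\C(C)$.

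Now consider the double cover $\widetilde{\av{X}}\to\av{X}$ defined by $y^{2}=\nu\cdot\nu$, branched along the isotropic curve $\{\nu\cdot\nu=0\}$. The previous step says that its restriction to each generic isophote component $C$ is the trivial cover, i.e.\ the intersection multiplicity of $C$ with $\{\nu\cdot\nu=0\}$ is even at every point. Because $\av{X}$ is rational and (by the standing assumption of the paper) non-developable, as $\f a$ and $\alpha$ vary the isophote components sweep out a Zariski-dense family of curves meeting $\{\nu\cdot\nu=0\}$ transversely at a generic point; an odd-multiplicity component of the zero divisor of $\nu\cdot\nu$ would then produce an odd intersection number with some generic $C$, contradicting the sign-splitting. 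Hence the divisor of $\nu\cdot\nu$ has all even multiplicities, $\nu\cdot\nu$ is a perfect square of a section, and on the affine patch where $\nu=\nabla f$ one has $\nabla f\cdot\nabla f\equiv\sigma^{2}\pmod{f}$ for some polynomial $\sigma$. By \cite{VrLa13} the offset is then reducible.

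The main obstacle is the globalisation step from ``the sign is well defined on each individual isophote component'' to ``$\sqrt{\nu\cdot\nu}$ is single-valued on all of $\av{X}$'', and it is handled by the transversality/intersection-multiplicity argument above. The non-developability of $\av{X}$ is essential here: without it the isophote components can be rulings along which the normal is constant and which entirely miss the isotropic curve, and the sign-splitting hypothesis can then be satisfied by a surface with an irreducible offset (as e.g.\ for a parabolic cylinder).
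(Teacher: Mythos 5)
Your reduction to the Pythagorean--Normal criterion is the right target, and the paper's own proof also works on the pulled-back polynomial $F_{\f a,\alpha}=\tilde L^{2}-\alpha^{2}(\f a\cdot\f a)\tilde N$. But the globalisation step, which you correctly identify as the crux, is where the argument breaks: every intermediate statement you derive from the splitting hypothesis is in fact automatically true for \emph{any} isophote component on \emph{any} surface, so your chain of deductions extracts no information from the hypothesis and would ``prove'' that every rational surface has reducible offsets. Concretely, on any component $C$ of $\{F_{\f a,\alpha}=0\}$ the defining relation gives $\tilde N|_{C}=\bigl(\tilde L|_{C}\bigr)^{2}/\bigl(\alpha^{2}(\f a\cdot\f a)\bigr)$, so $\tilde N|_{C}$ is always a perfect square in $\C(C)$ and the double cover $y^{2}=\tilde N$ always splits over $C$ --- split isophote or not. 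Likewise the parity obstruction can never fire: by additivity of intersection multiplicities, $I_{p}(F_{\f a,\alpha},\tilde N)=I_{p}(\tilde L^{2},\tilde N)=2\,I_{p}(\tilde L,\tilde N)$ is even at every point regardless of the multiplicities in the divisor of $\tilde N$. Finally, the transversality claim is false: if $p\in C\cap\{\tilde N=0\}$ then $\tilde L(p)^{2}=\alpha^{2}(\f a\cdot\f a)\tilde N(p)=0$, so all such intersections are confined to the fixed base locus $\{\tilde L=0\}\cap\{\tilde N=0\}$ (independent of $\alpha$), and there $\nabla F_{\f a,\alpha}=-\alpha^{2}(\f a\cdot\f a)\nabla\tilde N$, so the isophote is \emph{tangent} to the isotropic curve rather than meeting it transversely; the family does not sweep across $\{\tilde N=0\}$ at all.

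The information actually carried by the hypothesis is not ``$\sqrt{\tilde N}$ has a single-valued branch on each component $C$'' (automatic), but that the branch singled out by the sign of the angle is the restriction of one \emph{globally} single-valued branch on the whole parameter plane: the two polynomial factors of $F_{\f a,\alpha}$ must be, up to constants, $\tilde L\mp\alpha\sqrt{\f a\cdot\f a}\,\sqrt{\tilde N}$, and for these conjugate expressions to be polynomials one needs $\sqrt{\tilde N}\in\C[s,t]$, i.e.\ $\tilde N=\sigma^{2}$. That is exactly the paper's argument, and it yields the Pythagorean--Normal property $\f n^{2}=(f\sigma/n_{0})^{2}$ directly. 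If you want to salvage a cover-theoretic version of your argument, the right object is the double cover $w^{2}=\tilde N$ of the \emph{surface} (not its restriction to a single $C$): if $\tilde N$ is not a square this cover is irreducible, its deck involution exchanges the two local sign choices, and one must then show that monodromy mixes the signs along some irreducible component of a generic isophote --- a genuinely global statement that your component-by-component reasoning does not reach.
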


\begin{proof}
  Let $X(s,t)$ be an arbitrary proper parameterization of the affine surface $\av{X}_a$. The rational surface has the~reducible offset if and only if some (and thus any) proper parameterization fulfils the so called Pythagorean Normal property
   \begin{equation}
     \f n(s,t)^2:=\left(\partial_s X(s,t)\times\partial_t X(s,t)\right)^2=\sigma^2(s,t)
   \end{equation}
   for some rational function $\sigma$, see~\cite{ArSeSe97} for the proof. If we set $\f n=(n_1/n_0,n_2/n_0,n_3/n_0)$ and $f=\gcd(n_1,n_2,n_3)$ we may write $\f n = f \hat{\f n}/n_0$. The pull--back of the isophote $\av{I}_{\f a,\alpha}$ to the affine plane is then a curve given by the equation
    \begin{equation}\label{eq posledni}
       (\hat{\f n}(s,t)\cdot\f a)^2-\f a^2\hat{\f n}(s,t)^2=0.
    \end{equation}
    Because in $\af{2}$ is every curve given by a single equation and for each direction $\f a$ and angle $\alpha$ the equation \eqref{eq posledni} can be decomposed into two polynomial factors 
   \begin{equation}
    \left(\hat{\f n}(s,t)\cdot\f a-\alpha\sqrt{\f a^2}\sqrt{\hat{\f n}(s,t)^2}\right)\left(\hat{\f n}(s,t)\cdot\f a+\alpha\sqrt{\f a^2}\sqrt{\hat{\f n}(s,t)^2}\right)=0
   \end{equation}
   we conclude that $\hat{\f n}(s,t)^2=\sigma^2(s,t)$ for some polynomial $\sigma$. Hence the Pythagorean  Normal property $\f n^2=(f\sigma/n_0)^2$ is fulfilled and the offset of $\av{X}$ consists of two components.

\end{proof}

\begin{corollary}\label{cor sphere}
  The only real rational non-developable ruled surface $\av{R}$ with reducible isophotes is a~sphere.
\end{corollary}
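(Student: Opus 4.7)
The plan is to reduce the statement to an algebraic condition on the ruling direction via the PN--property used in the proof of Lemma~\ref{lem isophote then offset}, and then translate that condition into geometry. By Lemma~\ref{lem isophote then offset} applied to the rational surface $\av{R}$, reducibility of the isophotes forces reducibility of the offsets. Pick a proper parametrization of $\av{R}$ by its rulings, $R(s,t)=P(s)+t\,v(s)$; the criterion recalled in the proof of that lemma requires $|n(s,t)|^2\in\C[s,t]$ to be a perfect square, where $n=R_s\times R_t=A(s)+t\,B(s)$ with $A=P'\times v$ and $B=v'\times v$.

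The quadratic $|n|^2=A\cdot A+2t\,A\cdot B+t^2\,B\cdot B$ is a square in $t$ precisely when its discriminant vanishes, i.e.\ $(A\cdot B)^2=(A\cdot A)(B\cdot B)$. By the Lagrange identity this is equivalent to $|A\times B|^2\equiv 0$. The standard expansion of a double cross product yields
\[
A\times B=(P'\times v)\times(v'\times v)=[P',v',v]\,v-[v,v',v]\,v=-\det(P',v,v')\,v,
\]
because the triple product with a repeated argument vanishes. Hence the PN--condition is $\det(P',v,v')^2\,(v\cdot v)\equiv 0$. Non-developability of $\av{R}$ forces $\det(P',v,v')\not\equiv 0$ (this mixed determinant is exactly the obstruction for the tangent plane to vary along a ruling), so one obtains $v(s)\cdot v(s)\equiv 0$: every ruling direction is isotropic with respect to the absolute conic $\Omega$.

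Now I pass to the real-geometric conclusion. A real vector $v\in\R^3$ with $v\cdot v=0$ must be zero, so $\av{R}$ admits no real one-parameter family of rulings. By the proposition just before Lemma~\ref{lem eqn of quadric} this forces $\av{R}$ to be a quadric. The set of directions of the (complex) rulings of a quadric is the conic $\av{R}\cap\omega$; all these directions lying on $\Omega$ means $\av{R}\cap\omega\subseteq\Omega$, and since both are conics in $\omega$ while $\Omega$ is irreducible, we must have $\av{R}\cap\omega=\Omega$, which is the classical projective definition of a sphere.

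The step I expect to be the main obstacle is the clean algebraic manipulation bringing the PN--condition $|A\times B|^2\equiv 0$ to the form $\det(P',v,v')^2\,(v\cdot v)\equiv 0$: this is where the non-developability hypothesis must be used to strip the determinant factor and isolate isotropy of the rulings. Once this is accomplished the remainder is essentially packaging: the earlier proposition eliminates non-quadric candidates, and the characterization of spheres as the quadrics cutting $\omega$ in $\Omega$ is classical.
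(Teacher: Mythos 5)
Your proof is correct and its computational core coincides with the paper's: both reduce reducibility of the isophotes to the PN--condition via Lemma~\ref{lem isophote then offset}, write $\f n=A+tB$ along a ruled parameterization, and observe that $\f n^2$ being a square forces the discriminant $(A\cdot B)^2-A^2B^2$ to vanish. Where you diverge is in how that condition is exploited. The paper first disposes of quadrics by quoting the known classification (the sphere is the only quadric with reducible offsets) and then, for $\deg\av{R}>2$, invokes the Proposition to obtain a \emph{real} ruled parameterization, so that the vanishing discriminant becomes the equality case of Cauchy--Schwarz and yields linear dependence of $A$ and $B$, hence developability. You instead keep the computation over $\C$: the Lagrange identity turns the discriminant into $-|A\times B|^2$, and the triple-product identity $A\times B=-\det(P',v,v')\,v$ factors this as $\det(P',v,v')^2\,(v\cdot v)$, so non-developability isolates the isotropy condition $v\cdot v\equiv 0$ on the ruling directions. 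This buys you two things: reality of the parameterization is used only at the very end (a real isotropic $v$ must vanish, so no real family of rulings exists and the Proposition forces a quadric), and the quadric case is then settled internally---isotropic rulings give $\av{R}\cap\omega\subseteq\Omega$, hence equality and a sphere---rather than by citation. Two cosmetic points: ``is a square precisely when the discriminant vanishes'' is only valid in the direction you actually use (for $\f n^2$ to split one also needs the leading coefficient $B\cdot B$ to be a square in $\C(s)$), and your intermediate expansion of $(P'\times v)\times(v'\times v)$ is mistyped, though the final value $-\det(P',v,v')\,v$ is right.
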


\begin{proof}
 The only quadric with reducible offset is known to be a~sphere. Assume now, that the $\deg\av{R}>2$. Since it is real rational ruled surface  there exists a real proper parameterization of an~affine part
  \begin{equation}
    X(s,t)=P(s)+t  \overrightarrow{q}(s).
  \end{equation}
it is enough to show that the condition $\f n(s,t)^2=\sigma^2(s,t)$ is fulfilled for the sphere only.
Let us denote $\f n(s,t)=\f n_1(s)+t\f n_2(s)$, where $\f n_1=P'\times\overrightarrow{q}$ and $\f n_2=\overrightarrow{q}'\times\overrightarrow{q}$. Then $\f n^2(s,t)$ is a quadratic polynomial in $t$  and it is a~perfect square only if its discriminant w.r.t parameter $t$ vanishes identically
   \begin{equation}
     (\f n_1\cdot\f n_2)^2-\f n_1^2\, \f n_2^2\equiv 0.
   \end{equation}
   As $\f X(s,t)$ is a real parameterization the~equation is fulfilled if and only if $\f n_1(s,t)=\lambda(s,t)\f n_2(s,t)$ or at least one of $\f n_i^2(s,t)$ vanishes.
   If  $\f n_1$ and $\f n_2$ are linearly dependent then the tangent planes are constant along the~rulings and thus the surface is developable. Similarly $\f n_i^2\equiv 0$ implies $\f n_i\equiv(0,0,0)$, because the parameterization is real. And it is easily seen that the surface must be again developable.  Hence we conclude that there does not exist ruled surface  of degree larger than two with reducible offset and the corollary is proved. 
\end{proof}

\begin{remark}
  The assumption on the surface to be real is essential in Corrolary~\ref{cor sphere}. For example the surface $x_0^2 x_2 + x_1^2 x_2 + 2 x_0 x_2^2 + x_2^3 + \I x_0^2 x_3 - \I x_1^2 x_3 -\I x_2^2 x_3 + 2 x_0 x_3^2 + x_2 x_3^2 - \I x_3^3=0$  is a cubic ruled surface with parametrization $t_0(1:s:0:0)+t_1(0:2s:1-s^2:\I(1+s^2))$. Direct computation verifies that it is A~ non-developable surface with reducible offset, and thus by Lemma~\ref{lem offset then isophote} its isophotes split into two components.
\end{remark}

The isophotes on the~sphere consist of two circles -- they are intersections of the~sphere with a~circular cone with vertex located at the~center of the~sphere. Hence both components are rational.  Some other examples of surfaces with rational isophotes were studied in \cite{Aigner2009}. Unfortunately isophotes are typically curves of higher genus, as the following theorem proves for rational ruled surfaces.  

\begin{theorem}\label{thm genus of isophote}
Let $\av{R}$ be a real rational non--developable ruled surface different from the~sphere. And let the plane at infinity $\omega$ contains $k$ rulings (counting the multiplicities) of the surface then the generic isophote on $\av{R}$ is a~hyperelliptic curve of genus
\begin{equation}
  \genus(\av{I}_{\f a,\alpha})=\deg \av{R}-k-1.
\end{equation}
\end{theorem}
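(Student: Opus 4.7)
The plan is to pull the isophote back to the parameter domain of the ruled parametrization and recognise it as a hyperelliptic double cover of the line of rulings. In the affine form $R(s,t)=P(s)+t\overrightarrow{q}(s)$, with $\overrightarrow{q}(s)$ parametrizing the residual of $\av{R}\cap\omega$ after removing the $k$ rulings at infinity, the normal vector
\[
\f n(s,t)=\partial_s R\times\partial_t R=\f n_1(s)+t\f n_2(s),\qquad \f n_1=P'\times\overrightarrow{q},\quad \f n_2=\overrightarrow{q}'\times\overrightarrow{q},
\]
is linear in $t$. Substituting this into $(\f n\cdot\f a)^2-\alpha^2\f a^2\,\f n\cdot\f n=0$ produces a polynomial $F(s,t)=C(s)t^2+2B(s)t+A(s)$ which is quadratic in $t$, so the projection to the $s$-factor turns $\av{I}_{\f a,\alpha}$ into a $2{:}1$ cover of $\pr{1}$. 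This already establishes that the isophote is hyperelliptic, and the genus will be obtained via Riemann--Hurwitz by counting the ramification divisor.

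The branch locus is cut out by the discriminant $B^2-AC$, which I propose to evaluate using two vector identities. First, a direct application of the BAC--CAB rule gives
\[
\f n_1\times\f n_2=(P'\times\overrightarrow{q})\times(\overrightarrow{q}'\times\overrightarrow{q})=-\kappa(s)\,\overrightarrow{q}(s),\qquad \kappa(s):=\det[P',\overrightarrow{q},\overrightarrow{q}'],
\]
where $\kappa$ vanishes precisely at torsal parameters. Combining this with the Lagrange identity $(\f u\times\f v)^2=\f u^2\f v^2-(\f u\cdot\f v)^2$ and expanding $B^2-AC$ term by term yields
\[
B^2-AC=\alpha^2\f a^2\,\kappa(s)^2\bigl[(\overrightarrow{q}(s)\times\f a)^2-\alpha^2\f a^2\overrightarrow{q}(s)^2\bigr]=:\alpha^2\f a^2\,\kappa(s)^2\,R(s).
\]
Completing the square transforms the isophote into $u^2=\alpha^2\f a^2\kappa^2 R$; absorbing the square factor $\kappa^2$ into a new coordinate shows that the isophote is birational to the plane curve $v^2=R(s)$, and the torsal rulings contribute no branching.

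It remains to compute $\deg R$ and confirm that $R$ is squarefree. For generic $\f a$ the leading coefficient of $R$, namely $(1-\alpha^2)\f c^2\f a^2-(\f c\cdot\f a)^2$ where $\f c$ is the leading vector of $\overrightarrow{q}$, is non-zero, so $\deg R=2\deg\overrightarrow{q}$; and the decomposition of the plane section $\av{R}\cap\omega$ into the $k$ rulings plus a residual of degree $\deg\av{R}-k$ parametrized by $\overrightarrow{q}$ forces $\deg\overrightarrow{q}=\deg\av{R}-k$. Since $\av{R}$ is not a sphere, Corollary~\ref{cor sphere} combined with Lemma~\ref{lem isophote then offset} rules out reducibility of the isophote, so $R(s)$ cannot be a perfect square, and a further genericity argument on the pair $(\f a,\alpha)$ eliminates the remaining multiple roots. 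With $R$ squarefree of even degree $D=2(\deg\av{R}-k)$, the classical formula for the genus of $v^2=R(s)$ delivers
\[
\genus(\av{I}_{\f a,\alpha})=D/2-1=\deg\av{R}-k-1,
\]
as claimed. The principal obstacle is this last squarefree claim: one must translate irreducibility of the isophote into the absence of repeated factors of $R(s)$ and independently exclude accidental collisions of simple zeros caused by special positions of the direction $\f a$ relative to the curve of ruling directions.
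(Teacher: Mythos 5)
Your argument is correct and reaches the theorem by a genuinely different route from the paper. Both proofs share the same skeleton --- the isophote meets a generic ruling twice, so projecting along the rulings exhibits it as a double cover of $\pr{1}$, and the genus is then a branch-point count --- but the branch-point count itself is done differently. The paper works dually: it pushes the isophote forward by the normal map $\nu$ to a conic $\widehat{\av{I}}_{\f a,\alpha}\subset\pr{2}$, observes that a ruling ramifies exactly when its image line is tangent to that conic, and counts tangent rulings by intersecting the dual conic (again a conic, since the pencil $\lambda(\sum a_ix_i)^2+\mu\sum x_i^2$ is self-dual) with the degree-$(\deg\av{R}-k)$ section $\av{D}=\av{R}\cap\omega$ minus rulings, via B\'ezout. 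You instead compute the discriminant of the quadratic in $t$ explicitly; your identity $B^2-AC=\alpha^2\f a^2\,\kappa(s)^2\bigl[(\overrightarrow{q}\times\f a)^2-\alpha^2\f a^2\overrightarrow{q}^2\bigr]$ checks out (it follows from $(\f n_1\cdot\f a)\f n_2-(\f n_2\cdot\f a)\f n_1=\f a\times(\f n_2\times\f n_1)$, Lagrange's identity, and $\f n_1\times\f n_2=-\kappa\overrightarrow{q}$), and the residual factor $R(s)=0$ is precisely the condition that the ruling's point at infinity $\overrightarrow{q}(s)$ lies on the paper's dual conic --- so the two computations count the same divisor. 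What your version buys is an explicit Weierstrass model $v^2=R(s)$ and a transparent explanation of why torsal rulings do not contribute (the square $\kappa^2$); what the paper's version buys is that squarefreeness of the branch divisor reduces to a visibly generic transversality of a moving conic with the fixed curve $\av{D}$, which is harder to see from the coefficients of $R(s)$ directly.

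Two caveats, neither fatal. First, the squarefreeness of $R(s)$ that you flag as the principal obstacle is most cleanly settled by the dual picture: multiple roots of $R$ correspond to non-transverse intersections of $\av{D}$ with a conic moving in the three-parameter family \eqref{eq pencil vole}, and a generic member of that family meets the fixed curve $\av{D}$ transversally at smooth points; your appeal to Lemma~\ref{lem isophote then offset} and Corollary~\ref{cor sphere} only excludes the extreme case where $R$ is a perfect square, not isolated double roots, so you do need that transversality argument (the paper assumes it tacitly when it applies B\'ezout without multiplicities). Second, the step $\deg\overrightarrow{q}=\deg\av{R}-k$ presupposes that $s\mapsto\overrightarrow{q}(s)$ is a proper parameterization of the residual section at infinity; this is the same implicit hypothesis under which the paper identifies the ruling family with $\av{D}$, so you are no worse off, but it is worth stating.
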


\begin{proof}
  Denote by $\widehat{\av{I}}_{\f a,\alpha}$ the conic section in $\pr{2}$ given by equation
  \begin{equation}
     (\sum_{i=1}^3 a_ix_i)^2-\alpha^2(\sum_{i=1}^3a_i^2)(\sum_{i=1}^3x_i^2)=0,
  \end{equation}
  i.e., it is a closure of the image of the~isophote $\av{I}_{\f a,\alpha}$ under normal mapping~\eqref{eq normal mapping}. Now, observe that that the normal mapping sends a generic ruling $\av{L\subset R}$ to the line.  As the~tangent planes along $\av{L}$ form a~pencil, i.e. $\gamma(\av{L})$ is a line in $(\pr{3})^\vee$ and $\nu$ is just a composition of the~Gauss map with the projection, we conclude that $\nu(\av{L})\subset\pr{2}$ is indeed a line.  It intersects the conic $\widehat{\av{I}}_{\f a,\alpha}$ in two points and thus the isophote $\av{I}_{\f a,\alpha}=\nu^{-1}(\widehat{\av{I}}_{\f a,\alpha})$ intersects rulings in two points.
  
   Because $\av{R}$ is a~rational ruled surface, there exists a mapping $\phi:\av{R}\dashrightarrow\pr{1}$ whose fibers are exactly the rulings. (to see this take e.g. inverse of the proper parameterization $t_0\f p(s)+t_1\f q(s)$.) The restriction $\phi\mid_{\av{I}_{\f a,\alpha}}$ is then~a double cover of $\pr{1}$ and thus the isophote is a~hyperelliptic curve.
  
Using Riemann--Hurwitz formula it is possible to express the genus of the~isophote as
\begin{equation}\label{eq RH}
  \genus(\av{I}_{\f a,\alpha})=\frac{1}{2} \#\{\text{ramification points of }\phi\mid_{\av{I}_{\f a,\alpha}}\}-1.
\end{equation}
The mapping is ramified over the points, where the~isophote intersects the ruling at one point with multiplicity two. In other words the number of ramification points is the number of rulings $\av{L}$ such that $\nu(\av{L})$ is tangent to $\widehat{\av{I}}_{\f a,\alpha}$. In order to calculate the number of lines in the family tangent to the conic we pass to the dual space. 
Looking carefully at the definition of mappings $\gamma$ and $\nu$ we immediately see that the dual of the~line $\nu(\av{L})$ is a point $(p_1:p_2:p_3)$, for which $\av{L}\cap\omega = (0:p_1:p_2:p_3)$.  Therefore the family of rulings can be identified with a section $\av{D}$ of the surface $\av{R}$ by the plane at infinity -- it is $\av{R}\cap\omega$ with rulings contained in $\omega$ removed.  Hence $\deg\av{D}=\deg \av{R}-k$. Since $\widehat{\av{I}}_{\f a,\alpha}$ is a~regular conic for a generic choice of $\f a$ and $\alpha$ its dual is again a regular conic section and $\deg\widehat{\av{I}}_{\f a,\alpha}^\vee=2$.
The number of ramification points is then by B\'{e}zout theorem
\begin{equation}
  \#\left(\widehat{\av{I}}_{\f a,\alpha}^\vee\cap\av{D}\right)=(\deg\widehat{\av{I}}_{\f a,\alpha}^\vee)\cdot(\deg\av{D})=2(\deg\av{R}-k).
\end{equation} 
Substituting this into \eqref{eq RH} proves the theorem.
\end{proof}

\begin{corollary}
  A generic isophote on a~real rational ruled surface $\av{R}$ is a rational curve if and only if $\av{R}$ is a sphere or the coordinates of $\overrightarrow{q}(s)$ in the parameterization $P(s)+t \overrightarrow{q}(s)$ of affine part are linear polynomials.
\end{corollary}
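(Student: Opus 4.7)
The plan is to read the corollary off Theorem~\ref{thm genus of isophote} by examining when the formula $\genus(\av{I}_{\f a,\alpha})=\deg\av{R}-k-1$ produces a rational curve, treating the sphere separately because of possible reducibility of its isophotes.

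First I would dispose of the sphere. By Lemmas~\ref{lem offset then isophote} and~\ref{lem isophote then offset} together with Corollary~\ref{cor sphere}, the generic isophote on a real rational non--developable ruled surface is irreducible except when $\av{R}$ is a sphere; on a sphere the isophote is cut out by a circular cone with vertex at the centre and decomposes into two rational circles. Hence the sphere settles one half of the equivalence, and in the remainder I may identify rationality with vanishing of the genus.

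Next, for $\av{R}$ different from the sphere, Theorem~\ref{thm genus of isophote} reduces the question to $\deg\av{R}=k+1$. The proof of that theorem identifies $\deg\av{R}-k$ with $\deg\av{D}$, where $\av{D}\subset\omega$ is the curve swept out by the directions at infinity of the rulings not contained in $\omega$. Thus the condition becomes $\deg\av{D}=1$, i.e.\ $\av{D}$ is a line. Non-developability prevents $\av{D}$ from collapsing to a point, since otherwise all rulings would share a common direction and $\av{R}$ would be a cylinder, so the only remaining relevant possibility is that $\av{D}$ is genuinely a line in $\omega$.

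Finally I would translate $\deg\av{D}=1$ into the stated form on $\overrightarrow{q}$. One direction is immediate: if the coordinates of $\overrightarrow{q}(s)$ are linear and not all constant, then $s\mapsto(0:\overrightarrow{q}(s))$ visibly parameterizes a line. The converse is where the work is: a given $\overrightarrow{q}(s)$ may well parameterize the line $\av{D}$ improperly, with coordinate degrees larger than one. To remedy this I would take a proper linear parameterization $\overrightarrow{q}^\ast(u)$ of $\av{D}$ together with a proper parameterization of any second section of $\av{R}$ and invoke Proposition~\ref{prp coherent parameterizations} to synchronize them along common rulings, producing a new parameterization $P^\ast(u)+t\,\overrightarrow{q}^\ast(u)$ of the surface with the required linear direction part. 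The only delicate point, and the one I expect to be the main obstacle, is verifying that Proposition~\ref{prp coherent parameterizations} applies with $\av{D}$ in the role of a section, which reduces to showing that $\av{D}$ meets each ruling not contained in $\omega$ in precisely one point; this follows directly from its description as $(\av{R}\cap\omega)\setminus\{\text{rulings in }\omega\}$.
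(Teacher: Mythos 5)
Your proposal is correct and follows essentially the same route as the paper: apply Theorem~\ref{thm genus of isophote} to reduce rationality (for $\av{R}$ not a sphere) to $k=\deg\av{R}-1$, observe that $\av{R}\cap\omega$ then forces the remaining section $\av{D}$ to be a line parameterized by $\overrightarrow{q}(s)$. Your extra care about possibly improper parameterizations of $\av{D}$ (resolved via Proposition~\ref{prp coherent parameterizations}) is a legitimate refinement of a point the paper passes over silently, but it does not change the argument.
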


\begin{proof}
 Assume $\av{R}$ not being the sphere. Then by Theorem~\ref{thm genus of isophote} the generic isophote is rational if and only if there is $\deg\av{R}-1$ rulings contained in $\omega$. Because $\av{R}\cap\omega$ is a curve of degree $\deg\av{R}$ the~section of the surface by plane at infinity must be a line and $\overrightarrow{q}(s)$  is its parametrization.
\end{proof}

\begin{corollary}
  The number of components of the~real part of the~isophote on $\av{R}$ after desingularization is at most $\deg\av{R}-k$
\end{corollary}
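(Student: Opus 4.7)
The strategy is to combine Theorem~\ref{thm genus of isophote} with the classical Harnack bound for smooth real algebraic curves.

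First I would observe that the isophote $\av{I}_{\f a,\alpha}$ is defined over $\R$: both the direction $\f a\in\pr{2}(\R)$ and the cosine $\alpha\in\R$ enter the defining relation~\eqref{eq isophote} through real polynomial coefficients, and the surface $\av{R}$ is real by assumption. Hence the normalization $\widetilde{\av{I}}_{\f a,\alpha}$ carries a canonical real structure (an antiholomorphic involution induced by complex conjugation), and the ``real part after desingularization'' in the statement of the corollary is precisely the fixed locus of this involution.

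Next, by Theorem~\ref{thm genus of isophote}, for generic choices of $\f a$ and $\alpha$ the isophote is an irreducible hyperelliptic curve of genus
\[
  g = \genus(\av{I}_{\f a,\alpha}) = \deg\av{R}-k-1.
\]
In particular $\widetilde{\av{I}}_{\f a,\alpha}$ is a smooth connected real projective curve of genus $g$.

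The final step is Harnack's inequality, which asserts that the real locus of a smooth real projective curve of genus $g$ has at most $g+1$ connected components. Applying this to $\widetilde{\av{I}}_{\f a,\alpha}$ yields
\[
  \#\pi_0\bigl(\widetilde{\av{I}}_{\f a,\alpha}(\R)\bigr) \leq g+1 = \deg\av{R}-k,
\]
which is the desired bound. The only potential subtlety is to rule out degenerate situations where the generic isophote fails to be irreducible or the desingularization fails to be smooth; but these are excluded by the genericity assumption on $(\f a,\alpha)$ used in Theorem~\ref{thm genus of isophote}, together with Lemma~\ref{lem isophote then offset} and Corollary~\ref{cor sphere}, which guarantee that once $\av{R}$ is not a sphere the isophote does not split off into pieces corresponding to the two signs of the angle. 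Hence the Harnack estimate applies and produces the claimed upper bound $\deg\av{R}-k$.
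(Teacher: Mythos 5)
Your proof is correct, and it arrives at the same bound $g+1=\deg\av{R}-k$ as the paper, but by a slightly different route at the final step. The paper does not invoke the general Harnack--Klein inequality; instead it exploits the hyperellipticity established in Theorem~\ref{thm genus of isophote} directly: it passes to a real Weierstrass model $y^2=p(x)$ with $p$ square-free of degree $2g+2$, notes that the number of real connected components is a birational invariant, and counts at most $\tfrac{1}{2}\deg p = g+1$ ovals coming from the intervals on which $p\geq 0$. Your argument replaces this explicit count by a citation of Harnack's theorem for smooth real projective curves of genus $g$, applied to the normalization $\widetilde{\av{I}}_{\f a,\alpha}$ with its real structure. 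What the paper's version buys is self-containedness and a concrete model (the Weierstrass form is essentially a proof of Harnack in the hyperelliptic case); what your version buys is generality, since it would apply verbatim even if the isophote were not hyperelliptic, as long as its genus were known. Your attention to the hypotheses -- that the real structure descends to the normalization, and that genericity of $(\f a,\alpha)$ rules out the splitting into two angle components (which only occurs for the sphere by Corollary~\ref{cor sphere}) -- is a point the paper leaves implicit, and it is handled correctly.
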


\begin{proof}
  For a hyperelliptic curve $\av{C}$ defined over $\R$ of genus $g$ there exists a real~birational map $\av{E}\rightarrow\av{C}$, where $\av{E}$ is the~plane  curve in a Weierstrass form $y^2=p(x)$ for some square--free polynomial $p$ of degree $2g+2$. 
Since the number of real components is birational invariant and $\av{E}$ has at most $\frac{1}{2}\deg p$ real connected components, the corollary is proved.
\end{proof}

\begin{example}\label{ex quadric rational iso}
  Let $\av{Q}$ be an elliptic or hyperbolic paraboloid, i.e., a quadric given by equation $b^2x_1^2\pm a^2x_2^2-a^2b^2x_0x_3$.
  Its intersection with the~plane at infinity consists of two lines -- one is counted as the section and the second as the ruling. Thus by Theorem~\ref{thm genus of isophote}  the genus of generic isophote  equals to $\deg\av{Q}-1-1=0$. In fact the normal mapping 
  \begin{equation}
    \nu:(x_0:x_1:x_2:x_3)\mapsto(2b^2x_1^2:\pm 2a^2 x_2^2:-a^2b^2x_0)
  \end{equation}
is birational in this particular case. This confirms that the isophote is a~rational curve  as it is the~pull--back of a~conic section $\widehat{\av{I}}_{\f a,\alpha}$.
\end{example}

\begin{example}\label{ex quadric elliptic iso}
  For the remaining regular quadrics the intersection $\av{Q}\cap\omega$ is a~regular conic section and thus no ruling of $\av{Q}$ is contained in the infinity. Hence $\genus(\av{I}_{\f a,\alpha})=\deg\av{Q}-1=1$ and a generic isophote is an~elliptic curve. with at most two real connected components.
\end{example}

Note that whereas a~generic isophote on a~quadric $\av{Q}$ from Example~\ref{ex quadric elliptic iso} is elliptic, it does not mean that $\av{Q}$ does not possesses any rational isophote at all. Let $\av{D}$ be a curve from proof of Theorem~\ref{thm genus of isophote}, i.e., it can be identified with the regular conic section $\av{Q}\cap\omega$. Since $\widehat{\av{I}}_{\f a,\alpha}^\vee$ intersects $\av{D}$ in four points the isophote were considered as a double cover of $\pr{1}$ ramified at four points. However if $\widehat{\av{I}}_{\f a,\alpha}^\vee$ is tangent to $\av{D}$ there remains only two ramificacion points and the corresponding isophote is rational. (Its Weierstrass form would be $y^2-x^2p(x)$, where $p(x)$ is square-free of degree two. Hence a birational transform $\hat{y}=y/x$, $\hat{x}=x$ maps it to the~conic $\hat{y}^2=p(\hat{x})$.)

The set of conic sections may be identified with $\pr{5}$. Denote $\Phi$ the set of all conics tangent to the conic $\av{D}$. It is well known that $\Phi$ is a hypersurface of degree $6$. Next, by $\Psi$ we denote the closure of all the points in $\pr{5}$ corresponding to conic sections $\widehat{\av{I}}_{\f a,\alpha}^\vee$. Hence it is a set of conics dual to
\begin{equation}\label{eq pencil vole}
  \lambda(a_1x_1+a_2x_2+a_3x_3)^2+\mu(x_1^2+x_2^2+x_3^2+x_4^2).
\end{equation}
Simple computation shows that for each $(a_1:a_2:a_3)$ the pencil \eqref{eq pencil vole} is self dual and thus \eqref{eq pencil vole} parameterizes in fact the variety $\Psi$. Set $\av{V}\subset\pr{5}$ to be the set of double planes $(\sum a_ix_i)^2=0$. It is  Veronese surface and $\Psi$ is a cone over base $\av{V}$, whose  vertex is $\sum x_i^2$.  Hence $\Psi$ is a subvariety of $\pr{5}$ of dimension $3$ and degree $4$. The properties intersection product on subvarieties of $\pr{5}$ tell us that $\Phi\cap\Psi$ should be a surface of degree $6\cdot 4=24$. Realize that every double plane is tangent to every conic and thus $\av{V}\subset\Phi$, too. Luckily enough $\dim\av{V}=2$ and thus we have $\Phi\cap\Psi=\av{V}\cup\Xi$, where $\Xi$ is a surface of degree $20$ parameterizing rational isophotes on the quadric $\av{Q}$.

\section{Concluding remarks}

This paper was devoted to the study of contours and isophotes on ruled surfaces. We also presented the existence of the solution to the~reconstruction problems. Although ruled surfaces are used a lot in applications, contours and isophotes on other commonly used surfaces are not rational. There might be two possible directions of further research. First one is the~study of surfaces containing a lot of rational contour lines -- for example on dual to Del Pezzo surface there exists at least two parametric family of rational contour curves. Second, we can leave the strong assumption on rationality and focus on surface with manageable contour curves -- for example hyperelliptic curves posses relatively simple non-rational parameterizations, which allows to extend the class of studied surfaces e.g. by envelopes of quadratic cones and mentioned duals to Del Pezzo surfaces. The method for efficient rational approximation of hyperelliptic contour curves could be used to approximate isophotes on ruled surfaces as well.

\section*{Acknowledgements}

The author is supported by the project LO1506 of the Czech Ministry of Education, Youth and Sports.

%
%

\bibliographystyle{siam}
\bibliography{bibliography,bibliographyML}

\end{document}